\title{Normal numbers and completeness results for difference sets}
\author{Konstantinos A. Beros}
\thanks{The author acknowledges the US NSF grant DMS-0943870 for the support of his research.  He also wishes to thank Bill Mance for useful conversations on the subject of normal numbers.  Finally, the author is grateful to his brother Achilles Beros for thorough reading of various drafts of this paper, for his insightful comments and for subsequent discussions.}
\keywords{Difference hierarchy, normal numbers, completeness.}
\subjclass[2010]{%
03E15, 
28A05. 
}
\address{Department of Mathematics, University of North Texas, General Academics Building 435, 1155 Union Circle, \#311430, Denton, TX 76203-5017}
\email{beros@unt.edu}
\theoremstyle{plain}
\newtheorem{theorem}{Theorem}[section]
\newtheorem{corollary}[theorem]{Corollary}
\newtheorem*{theorem*}{Theorem}
\newtheorem*{corollary*}{Corollary}
\newtheorem*{proposition*}{Proposition}
\theoremstyle{definition}
\newtheorem{definition}[theorem]{Definition}
\newtheorem*{claim}{Claim}
\theoremstyle{remark}
\newtheorem*{remark*}{Remark}
\numberwithin{equation}{section}
\def\concat{{}^\smallfrown}
\def\upto{\upharpoonright}
\def\NN{\omega}
\def\ZZ{{\mathbb Z}}
\def\RR{{\mathbb R}}
\begin{document}

\begin{abstract}
We consider some natural sets of real numbers arising in ergodic theory and show that they are, respectively, complete in the classes $\mathcal D_2 (\mathbf\Pi^0_3)$ and $\mathcal D_\omega (\mathbf \Pi^0_3)$, that is, the class of sets which are 2-differences (respectively, $\omega$-differences) of $\mathbf \Pi^0_3$ sets.
\end{abstract}

\maketitle

\section{Introduction}

A recurring theme in descriptive set theory is that of analyzing the descriptive (or definable) complexity of naturally occurring sets from other areas of mathematics.  In the present work, we consider certain sets which arise in ergodic theory.  

Suppose that $T: [0,1] \rightarrow [0,1]$ is a Borel map which preserves Lebesgue measure.  It is of interest to consider those points $x \in [0,1]$ which exhibit ``chaotic'' or random behavior with respect to $T$ and its iterates.  For instance, one might consider those points $x$ such that $\{ T^n (x) : n \in \NN\}$ is dense in $[0,1]$.  (Here $T^n (x)$ denotes the $n$-fold iterate of the map $T$, applied to $x$.)  Given such a $T$, there is another type of chaotic behavior that is related to uniform distribution.  Specifically, one considers those points $x$ such that the sequence $x , T(x) , T^2(x) , \ldots$ is {\em uniformly distributed}, that is, for each subinterval $I \subseteq [0,1]$, 
\[
\lim_{n \rightarrow \infty}\frac{\mathrm{Cardinality}( \{ i < n : T^i (x) \in I \} )}{n} = \mathrm{length} (I).
\]
If one considers the transformation $T(x) = bx \, (\mathrm{mod}\ 1)$, for some fixed integer $b \geq 2$, then an $x \in [0,1]$ exhibiting this type of chaotic behavior is called {\em normal to base $b$}.  It can be shown that $x \in [0,1]$ is normal to base $b$ if, for each integer $k \geq 1$ and $m < b^k$, 
\[
\lim_{n \rightarrow \infty}\frac{\mathrm{Cardinality}( \{ i < n : m/b^k \leq T^i (x) < (m+1)/b^k \} )}{n} = 1/b^k.
\]
In turn, this is equivalent to the combinatorial statement that every finite string, $\sigma$, of numbers $0$ through $b-1$ occurs in the $b$-ary expansion of $x$, with frequency (in the limit) $b^{-\mathrm{length} (\sigma)}$.  It is a consequence of the Birkhoff Ergodic Theorem that, for each integer $b \geq 2$, the set of $x \in [0,1]$ which are normal to base $b$ has Lebesgue measure $1$.

Next we introduce some notions from descriptive set theory.  Recall that a {\em pointclass} is a family of sets which can be described in any complete separable metric space, e.g., the classes of $F_\sigma$, $G_\delta$, or analytic sets are all pointclasses.  In the present work, the pointclasses we consider are $\mathbf \Pi^0_3$, $\mathcal D_2 (\mathbf \Pi^0_3)$ and $\mathcal D_\omega (\mathbf \Pi^0_3)$.  The class $\mathbf \Pi^0_3$ is that of sets which have the form $\bigcap_{m\in \NN} \bigcup_{n\in \NN} F_{m,n}$, with each $F_{m,n}$ a closed set.  The class $\mathcal D_2 (\mathbf \Pi^0_3)$ is that of sets having the form $A\setminus B$, with $A,B \in \mathbf \Pi^0_3$.  Finally, sets in $\mathcal D_\omega (\mathbf \Pi^0_3)$ have the form $\bigcup_k A_{2k+1} \setminus A_{2k+2}$, where $A_1 \supseteq A_2 \supseteq \ldots$ are $\mathbf \Pi^0_3$ sets.

\begin{definition}
If $\Gamma$ is a pointclass, we say that $X \in \Gamma$ is {\em $\Gamma$-complete} iff every $Y \subseteq \{0,1\}^\NN$, with $Y \in \Gamma$, is a continuous preimage of $X$.
\end{definition}

\noindent In a sense, a $\Gamma$-complete set ``encodes'' all $\Gamma$ subsets of $\{0,1\}^\NN$.  

There are many well-known $\mathbf \Pi^0_3$- complete sets.  For instance, the set
\[
\{ x \in \NN^\NN : \lim_{n \rightarrow \infty} x(n) = \infty\}.
\]
(See \S23 of Kechris \cite{KECHRIS.dst}.)  In 1994, Haseo Ki and Tom Linton \cite{ki.linton} published an interesting completeness result related to the set of numbers normal to base $b$.

\begin{theorem}[Ki-Linton]
The set of real numbers which are normal to base $b$ is $\mathbf \Pi^0_3$-complete.
\end{theorem}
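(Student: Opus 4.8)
The plan is to prove both that the set of base-$b$ normal numbers, call it $\mathcal{N}$, lies in $\mathbf\Pi^0_3$ and that it is hard for this class. For membership I would use the classical equivalence that $x$ is normal to base $b$ if and only if $x$ is \emph{simply} normal to base $b^k$ for every $k\ge 1$, i.e.\ for every $k$ and every digit $d<b^k$ the frequency of $d$ among the first $N$ base-$b^k$ digits of $x$ tends to $b^{-k}$. For fixed $N,k,d$ and rational $\varepsilon$ the condition $\lvert\mathrm{freq}_N(d)-b^{-k}\rvert\le\varepsilon$ is clopen in $x$; quantifying $\forall N\ge M$ gives a closed set, $\exists M$ a $\mathbf\Sigma^0_2$ set, and $\forall\varepsilon$ a $\mathbf\Pi^0_3$ set. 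Intersecting over the countably many pairs $k,d$ keeps us in $\mathbf\Pi^0_3$, so $\mathcal N\in\mathbf\Pi^0_3$.

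For completeness I would build a continuous reduction from the $\mathbf\Pi^0_3$-complete set $P=\{\alpha\in\NN^\NN:\lim_n\alpha(n)=\infty\}$ displayed above, noting that $P=\{\alpha:\liminf_n\alpha(n)=\infty\}$. Given $\alpha$ I define $f(\alpha)\in[0,1]$ by prescribing its base-$b$ expansion as a concatenation $B_0 G_0 B_1 G_1\cdots$ of finite blocks built in stages. Two ingredients are needed. First, \emph{good} (normalizing) blocks $G_n$: for each precision $p$ and each length bound there is an arbitrarily long finite string over $\{0,\dots,b-1\}$ whose internal frequencies of every base-$b^k$ digit, for all $k\le p$, lie within $2^{-p}$ of $b^{-k}$; such blocks exist by a Champernowne/de Bruijn construction. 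Second, \emph{spoiling} blocks $B_n$: a run of zeros. The lengths are chosen by a running count: fix a continuous decreasing $g\colon\NN\to(0,1-1/b)$ with $g(v)\to 0$, and let $B_n$ be a zero-run whose length, relative to the number of digits produced so far, is tuned so that immediately after $B_n$ the cumulative frequency of the digit $0$ in base $b$ equals exactly $b^{-1}+g(\alpha(n))$. Then $G_n$ is a good block of precision $n$ chosen so long that it dominates everything before it, which drives every cumulative base-$b^k$ frequency back to within, say, $2^{-n}$ of $b^{-k}$. Each output digit depends on only finitely much of $\alpha$, so $f$ is continuous.

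The verification rests on a limsup/liminf computation for the cumulative frequencies. A zero-run of relative length $c$ raises the cumulative frequency of the base-$b^k$ zero digit by $c(1-b^{-k})/(1+c)$, a quantity comparable, across all $k$ simultaneously, to the base-$b$ bump $g(\alpha(n))$ and in particular tending to $0$ with $g(\alpha(n))$. Since each good block brings all these frequencies back near uniform, for every $k$ the cumulative base-$b^k$ frequencies have liminf $b^{-k}$ and limsup $b^{-k}+O\!\left(g(\liminf_n\alpha(n))\right)$. Hence if $\alpha\in P$ the bumps tend to $0$, every base-$b^k$ statistic converges to $b^{-k}$, and $f(\alpha)$ is normal; while if $\liminf_n\alpha(n)=L<\infty$ the peaks $b^{-1}+g(\alpha(n))$ with $\alpha(n)\le L$ recur, so the base-$b$ zero-frequency has limsup $\ge b^{-1}+g(L)>b^{-1}$ and $f(\alpha)$ is not even simply normal to base $b$. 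Thus $\alpha\in P\iff f(\alpha)\in\mathcal N$, giving completeness.

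I expect the main obstacle to be the quantitative block combinatorics: producing good blocks that simultaneously control all base-$b^k$ digit frequencies (uniformly in $k$ up to the current precision) and proving the bump-and-repair estimates tightly enough to pin down the exact limits, rather than merely the liminf and limsup, of every one of the countably many frequency sequences. Secondary care is needed to keep $f$ well defined into $[0,1]$ (avoiding the ambiguous base-$b$ expansions, which is automatic here since the expansion always contains later good blocks and is never eventually constant) and to check continuity of the stage-wise length choices.
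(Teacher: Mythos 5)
The paper itself gives no proof of this statement---it is quoted as a known result of Ki--Linton \cite{ki.linton}---so the only meaningful comparison is with the machinery the paper develops for Theorems~\ref{T1} and~\ref{T2}. Your upper bound (normality iff simple normality to every base $b^k$, then the count $\forall\varepsilon\,\exists M\,\forall N$ over clopen conditions, intersected over countably many pairs $k,d$) is correct and standard. For hardness you reduce from the $\mathbf\Pi^0_3$-complete set $P=\{\alpha : \lim_n \alpha(n)=\infty\}$, transcribing $\liminf_n\alpha(n)$ into the size of recurrent ``bumps'' in the zero-density via your function $g$; the paper instead reduces an \emph{arbitrary} $\mathbf\Pi^0_3$ set $\bigcap_m\bigcup_n F_{m,n}$ directly, via a permitting structure keyed to whether the basic open sets $[x\upto p]$ meet the closed sets $F_{m,n}$. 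For this one theorem your route is legitimate and arguably cleaner, since the combinatorics of $P$ are so simple; the paper's extra generality is what it needs for the difference classes $\mathcal D_2(\mathbf\Pi^0_3)$ and $\mathcal D_\omega(\mathbf\Pi^0_3)$, where no off-the-shelf complete set as convenient as $P$ exists.

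There is, however, one step that fails as written. You require of a good block $G_n$ only that its \emph{whole-block} frequencies be within $2^{-p}$ of uniform, and you then assert that appending $G_n$ ``drives every cumulative frequency back'' to near-uniform. That controls the cumulative frequencies only at the \emph{end} of $G_n$. Since $G_n$ must dominate everything built so far, the cumulative frequencies at positions \emph{inside} $G_n$ are governed by the prefixes of $G_n$, and a block with perfect overall statistics can still begin with, say, a run of zeros as long as the entire previous output. With such blocks the zero-frequency would spike inside every $G_n$, so $f(\alpha)$ would fail to be normal even when $\alpha\in P$, and the forward direction of the reduction collapses. What you need is the stronger property that every sufficiently long prefix of $G_n$ is near-uniform. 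The standard repair---and exactly the device the paper uses for its own theorems, with Good's normal recurring strings $\eta_i$ and the blocks $(0110)^n$---is to take $G_n$ to be a high power $\eta^i$ of a single fixed string $\eta$ whose infinite repetition is order-$p$ normal; periodicity then yields prefix control automatically, and it also resolves the ``exact limits versus liminf/limsup'' worry you flag, since once prefixes are controlled all frequency sequences genuinely converge when the bumps vanish. With that substitution (and with ``exactly $b^{-1}+g(\alpha(n))$'' relaxed to ``within $o(1)$ of'', to respect integer block lengths), your argument goes through.
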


More work in this vein has been done subsequently by others.  For instance, Ver\'onica Becher, Pablo Heiber and Ted Slaman \cite{becher.heiber.slaman.2} showed that the set of real numbers which are normal to all bases is also $\mathbf \Pi^0_3$-complete.  In other work, Becher and Slaman \cite{becher.slaman.sigma4} have shown that the set of numbers normal to at least one base is $\mathbf \Sigma^0_4$-complete.  

In general, however, there are not many known completeness results for difference classes, e.g., $\mathcal D_2 (\mathbf \Pi^0_3)$ and $\mathcal D_\omega (\mathbf \Pi^0_3)$.  In what follows, we shall prove completeness results for the classes $\mathcal D_2 (\mathbf \Pi^0_3)$ and $\mathcal D_\omega (\mathbf \Pi^0_3)$.  Before stating our result, we introduce some more terminology.

For the present work, we mostly restrict attention to the case of $b=2$, as this will simplify our notation somewhat.  As a weakened form of normality, one may consider those $x \in [0,1]$ which are {\em order-$k$ normal} in base 2.  That is, such that, for each $j < 2^k$,
\[
\lim_{n \rightarrow \infty}\frac{\mathrm{Cardinality}( \{ i < n : T^i(x) \in [j/2^k , (j+1)/2^k) \} )}{n} = 2^{-k},
\]
where $T:[0,1] \rightarrow [0,1]$ is the map $x \mapsto 2x \, (\mathrm{mod} \ 1)$.  Let $N_k$ denote the set of numbers which are order-$k$ normal in base $2$.  Note that a real number $x$ is normal iff it is order-$k$ normal, for each $k \geq 1$.


Examining the proofs in Ki-Linton \cite{ki.linton}, one may deduce the following theorem.

\begin{theorem}[Ki-Linton]
The sets $N_1$ and $N_2$ are $\mathbf \Pi^0_3$-complete.
\end{theorem}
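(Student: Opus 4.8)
The plan is to prove the stronger statement that a \emph{single} continuous reduction witnesses the completeness of both $N_1$ and $N_2$ at once. First, membership in $\mathbf\Pi^0_3$ is routine: writing $S_n(x)=\#\{i<n: T^i(x)\in[1/2,1)\}$, order-$1$ normality is $\bigcap_{k}\bigcup_{N}\bigcap_{n\ge N}\{\,|S_n/n-1/2|\le 1/k\,\}$, an $F_{\sigma\delta}$ condition, and order-$2$ normality is analogous with the four length-$2$ window counts. The key structural observation is that $N_2\subseteq N_1$: in base $2$ the frequency of the digit $1$ equals the combined frequency of the length-$2$ windows $10$ and $11$, so if each of the four windows has frequency tending to $1/4$ then the digit $1$ has frequency tending to $1/2$. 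Consequently, if I can build a continuous $f$ from a fixed $\mathbf\Pi^0_3$-complete set $P$ such that $x\in P$ forces $f(x)$ to be order-$2$ normal while $x\notin P$ forces $f(x)$ to fail even order-$1$ normality, then $f^{-1}(N_1)=f^{-1}(N_2)=P$, establishing both results simultaneously.

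For the reduction I would take $P=\{x\in\NN^\NN:\lim_n x(n)=\infty\}$, which is $\mathbf\Pi^0_3$-complete, and define $f(x)\in\CS$ as an infinite concatenation of finite words built in stages. Writing $L_n$ for the length produced after $n$ stages, at stage $n$ I append two words: a \emph{biasing run} $0^{b_n}$ with $b_n=\lceil L_n/(x(n)+1)\rceil$, followed by a \emph{balancing word} consisting of $c_n$-many repetitions of the block $0011$, where $c_n$ is chosen large enough that the resulting excess of $0$s over $1$s is at most a $1/(n{+}1)$-fraction of the total length. Since every block length is determined by $x\upto n$, the map $f$ is continuous; reading $f(x)$ as the binary expansion of a real is harmless because the output always contains infinitely many $0$s and $1$s, so it is never a dyadic rational and its order-$k$ normality as a real coincides with that of the sequence.

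The two verifications then run as follows. If $x\notin P$, there is a fixed $k$ with $x(n)\le k$ for infinitely many $n$; for each such $n$ the biasing run has length $b_n\ge L_n/(k+1)$, and since the construction never creates more $1$s than $0$s, the digit-$1$ frequency just after this run is at most $(k+1)/\big(2(k+2)\big)=1/2-1/\big(2(k+2)\big)$. As this happens infinitely often, $\liminf_m S_m/m\le 1/2-1/\big(2(k+2)\big)<1/2$, so $f(x)\notin N_1$ and hence $f(x)\notin N_2$. If instead $x\in P$, then $b_n/L_n\le 1/(x(n)+1)\to 0$, so the biasing runs (and the finitely many block boundaries) contribute only a vanishing density of digits and of length-$2$ windows; the balancing words dominate, and since each period $0011$ contributes exactly one of each window $00,01,10,11$, all four window frequencies tend to $1/4$. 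Thus $f(x)$ is order-$2$ normal, i.e. $f(x)\in N_2\subseteq N_1$.

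The main obstacle is the simultaneous calibration of the block lengths: the biasing runs must be long enough (a fixed fraction of the length-so-far) to spoil convergence in the ``no'' case, yet must become asymptotically negligible in the ``yes'' case. What makes this tractable is that the ``no''-case failure is detected at the \emph{intermediate} length reached immediately after each biasing run, before the following balancing word restores the frequency to near $1/2$; this detection is completely insensitive to how long the balancing words are. I am therefore free to make the balancing words as long as I wish, which is exactly what drives the deficiency ratio to $0$ and secures order-$2$ normality in the ``yes'' case without disturbing the ``no'' case.
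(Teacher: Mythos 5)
Your proposal is correct, but note that the paper never actually proves this theorem: it is quoted from Ki--Linton, so the honest comparison is with the reduction machinery the paper builds for its own Theorems 1.4 and 1.5. Relative to that, your route is genuinely different and more elementary. The paper's method reduces an \emph{arbitrary} $\mathbf\Pi^0_3$ set $\bigcap_m\bigcup_n F_{m,n}$ (indeed an arbitrary difference of such sets), using the permitting structure ``does $[x\upto p]$ still meet $F_{m,n'}$?'' to decide when to insert long biasing blocks; you instead reduce from the single known complete set $P=\{x\in\NN^\NN:\lim_n x(n)=\infty\}$ and let the values $x(n)$ themselves serve as the permitting, which eliminates the closed-set bookkeeping entirely. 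Your second idea---arranging $x\in P\Rightarrow f(x)\in N_2$ and $x\notin P\Rightarrow f(x)\notin N_1$, so that $N_2\subseteq N_1$ makes one map witness both completeness results---has no analogue in the paper. The quantitative core of your verification is sound: the excess-of-zeros condition keeps the cumulative biasing density at most roughly $1/n$ at stage ends, hence $o(1)$ even at the worst prefix (immediately after a biasing run) in the ``yes'' case, while the ``no''-case failure is read off at exactly those intermediate prefixes, where the $1$-frequency is at most $(k+1)/(2(k+2))$, independently of the balancing words that follow. The trade-off is that your argument presupposes a combinatorially tractable $\mathbf\Pi^0_3$-complete set to reduce from, which is precisely the resource the paper says is unavailable for $\mathcal D_2(\mathbf\Pi^0_3)$ and $\mathcal D_\omega(\mathbf\Pi^0_3)$; so the paper's heavier permitting machinery is what generalizes to the difference classes, while yours is the cleaner proof of the base case. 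Three small points to tighten in a full write-up: (i) require $c_n\geq 1$, since otherwise every block could be empty and $f(x)$ finite; (ii) the negligibility of block boundaries is not automatic but follows from the excess condition, which forces $L_{n+1}\geq(n+1)\sum_{j\leq n}b_j\geq n(n+1)$, so the number of stages reached by length $s$ is $O(\sqrt{s})$; (iii) the ``routine'' membership claim is truly routine only in $\{0,1\}^\NN$, where the sets $\{\,|S_n/n-1/2|\leq 1/k\,\}$ are clopen---in $[0,1]$ they are finite unions of half-open dyadic intervals, and the naive count then lands in $\mathbf\Pi^0_4$; the standard fix is to split off the countable set of dyadic rationals (none of which are normal) and pull back the Cantor-space version of $N_k$ through the expansion map on the $G_\delta$ set of non-dyadic reals.
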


Inspired by this observation, we proved the following result.

\begin{theorem}\label{T1}
The set $N_1 \setminus N_2$ is $\mathcal D_2 (\mathbf \Pi^0_3)$-complete.
\end{theorem}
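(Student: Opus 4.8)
The plan is first to dispatch membership and then to concentrate on hardness. Since order-$2$ normality implies order-$1$ normality we have $N_2 \subseteq N_1$, so $N_1 \setminus N_2 = N_1 \cap (N_2)^c$; as $N_1, N_2 \in \mathbf\Pi^0_3$ (by the Ki--Linton theorem they are even $\mathbf\Pi^0_3$-complete), this exhibits $N_1 \setminus N_2$ as a difference of two $\mathbf\Pi^0_3$ sets, hence $N_1 \setminus N_2 \in \mathcal D_2(\mathbf\Pi^0_3)$. For hardness, fix $Y \subseteq \{0,1\}^\NN$ in $\mathcal D_2(\mathbf\Pi^0_3)$ and write $Y = P \setminus Q$ with $P, Q \in \mathbf\Pi^0_3$; replacing $Q$ by $P \cap Q$ (still $\mathbf\Pi^0_3$) we may assume $Q \subseteq P$. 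It then suffices to construct a single continuous $f : \{0,1\}^\NN \to [0,1]$ with $f^{-1}(N_1) = P$ and $f^{-1}(N_2) = Q$, for then $f^{-1}(N_1 \setminus N_2) = P \setminus Q = Y$.

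I would build $f(x)$ by specifying its binary expansion as an infinite concatenation of finite blocks drawn from three types whose normality statistics I can compute exactly: alternating blocks $(01)^N$, which are order-$1$ normal (each digit has frequency $\tfrac12$) but grossly fail order-$2$ normality (the pairs $00$ and $11$ never occur); ``de Bruijn'' blocks $(0011)^N$, which are order-$2$ normal (each of $00,01,10,11$ has frequency $\tfrac14$, hence each digit frequency $\tfrac12$); and constant blocks $0^N$, which destroy order-$1$ normality. The decisive structural point is that $(01)^N$ and $(0011)^N$ are \emph{both} order-$1$ normal, so an arbitrary interleaving of these two types preserves order-$1$ normality, and only constant blocks can break it. This decouples the two requirements: the density of constant blocks will encode membership in $P$ and thereby control $N_1$-membership, while the ratio of alternating to de Bruijn blocks among the remaining blocks will encode membership in $Q$ and thereby control $N_2$-membership, which is only relevant once $x \in P$ already holds.

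To realize the two $\mathbf\Pi^0_3$ predicates as the desired density conditions I would run the Ki--Linton block construction twice on one and the same expansion. Writing $P = \{x : \forall m\, \exists n\, \forall k \geq n\, R_P(x,m,k)\}$ and similarly for $Q$, I would use rapidly growing block lengths so that the $\forall m\, \exists n\, \forall k$ pattern is mirrored in the construction: at each level $m$ one keeps inserting ``bad'' blocks (constant blocks for the $P$-dial, alternating blocks for the $Q$-dial) until a witness to the relevant existential appears, whereupon a long run of de Bruijn blocks washes out the accumulated frequency deviation. When $x \in P$ every level eventually stops contributing constant blocks, so their density tends to $0$ and $f(x)$ is order-$1$ normal; when $x \notin P$ some level contributes constant blocks cofinally and the digit frequency fails to converge to $\tfrac12$. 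The identical scheme on the alternating-versus-de Bruijn dial makes $f(x)$ order-$2$ normal exactly when $x \in Q$; and since $Q \subseteq P$ the two dials are consistent, since when $x \notin P$ we automatically have $x \notin Q$ and order-$1$ failure already forces $f(x) \notin N_2$. Continuity of $f$ holds because the $n$-th block of $f(x)$ depends only on a finite initial segment of $x$.

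The main obstacle, and the step requiring the most care, is the simultaneous bookkeeping of the two frequency statistics along one block sequence. I must choose the block lengths to grow fast enough that, at each threshold, the digits contributed by earlier blocks are a vanishing fraction of the total, so that the limiting order-$1$ and order-$2$ frequencies are governed by the asymptotic block mixture and not by transient initial behavior; and I must verify that the order-$2$ dial never disturbs the order-$1$ frequency, which is guaranteed by the common digit balance of $(01)^N$ and $(0011)^N$ but must be checked at block boundaries, where spurious pairs are introduced only $O(1)$ times per block and so contribute negligibly in the limit. Once these density estimates are established, the equivalences $f(x) \in N_1 \iff x \in P$ and $f(x) \in N_2 \iff x \in Q$ follow, completing the reduction.
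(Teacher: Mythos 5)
Your overall architecture is the same as the paper's: membership is immediate since $N_1,N_2\in\mathbf\Pi^0_3$, and hardness is obtained from a single continuous map $f$ with $f^{-1}(N_1)=P$ and $f^{-1}(N_2)=Q$ (after arranging $Q\subseteq P$), built by concatenating blocks of three kinds --- order-$2$ normal blocks, blocks that are order-$1$ but not order-$2$ normal, and blocks that break order-$1$ normality --- with insertions governed by a witness/permitting bookkeeping for the two $\mathbf\Pi^0_3$ predicates. The paper's versions of these blocks are $(0110)^n$, $\alpha_n=(0110)^n\concat(10)$ and $\beta_n=(0110)^n\concat 0$, and its Case~1/Case~2 structure plays the role of your witness-falsification scheme.

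There is, however, a genuine gap at the step ``when $x\in P$ every level eventually stops contributing constant blocks, so their density tends to $0$ and $f(x)$ is order-$1$ normal'' (and at its order-$2$ twin). Even when $x\in P$, infinitely many bad blocks are inserted in total: finitely many per level, but over infinitely many levels, since in general each level $m$ has spurious candidate witnesses that die at late stages, and the construction must be correct for every such configuration. Now consider what a single bad-block insertion must accomplish. For the direction $x\notin P\Rightarrow f(x)\notin N_1$, a fixed level $m$ acting infinitely often must push $d_0$ above $\tfrac{1}{2}+\delta_m$ infinitely often for some fixed $\delta_m>0$; since a block of length $\ell$ appended to a string of length $L$ can move a frequency by only about $\ell/(L+\ell)$, each bad block must have length a fixed positive proportion of the current length. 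But then, for the direction $x\in P\Rightarrow f(x)\in N_1$, the infinitely many bad blocks coming from ever-higher levels each push $d_0$ to $\tfrac{1}{2}+\delta_m$; if $\delta_m$ is level-independent --- and for your pure blocks $0^N$ and $(01)^N$ the intrinsic deviations are constants ($d_0=1$, respectively $d_{00}=0$) --- then $d_0$ fails to converge to $\tfrac{1}{2}$ and $f(x)\notin N_1$ after all. The two directions can be reconciled only by calibrating the permitted deviation per level so that $\delta_m\rightarrow 0$ as $m\rightarrow\infty$. This is precisely what the paper's design accomplishes: its level-$m$ bad blocks $\alpha_m$ and $\beta_m$ have intrinsic deviations $\Theta(1/m)$, and Case~2 at level $m$ pushes $d_{10}$ up only to $(m+\tfrac{3}{4})/(4m+2)\rightarrow\tfrac{1}{4}$ and $d_0$ up only to $(2m+\tfrac{3}{4})/(4m+1)\rightarrow\tfrac{1}{2}$. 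Your ``rapidly growing block lengths'' remark addresses washing out the past, not the size of the deviations being created, so as written the positive direction of your reduction fails; to repair it, replace the pure bad blocks by level-diluted ones (e.g., $(0011)^m\concat(01)$ and $(0011)^m\concat 0$) so that the disturbance level $m$ is allowed to make tends to the normal value as $m\rightarrow\infty$.
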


\begin{corollary}
The set $N_1 \setminus N_2$ is properly $\mathcal D_2 (\mathbf \Pi^0_3)$.
\end{corollary}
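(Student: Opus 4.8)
The plan is to deduce the corollary from Theorem~\ref{T1} by a soft argument. Since $N_1 \setminus N_2$ is $\mathcal D_2(\mathbf\Pi^0_3)$-complete it certainly lies in $\mathcal D_2(\mathbf\Pi^0_3)$ (this is also visible directly: order-$2$ normality implies order-$1$ normality, so $N_2 \subseteq N_1$, and both sets are $\mathbf\Pi^0_3$ by the Ki--Linton theorem, whence $N_1 \setminus N_2$ has the form $A \setminus B$ with $A,B \in \mathbf\Pi^0_3$). Thus the only thing to prove is that $N_1 \setminus N_2$ does \emph{not} belong to the dual class $\check{\mathcal D}_2(\mathbf\Pi^0_3)$, consisting of complements of $\mathcal D_2(\mathbf\Pi^0_3)$ sets; here ``properly $\mathcal D_2(\mathbf\Pi^0_3)$'' means precisely membership in $\mathcal D_2(\mathbf\Pi^0_3) \setminus \check{\mathcal D}_2(\mathbf\Pi^0_3)$.

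Write $\Gamma = \mathcal D_2(\mathbf\Pi^0_3)$. First I would record two routine closure facts: because $\mathbf\Pi^0_3$ is closed under continuous preimages and preimages commute with set-theoretic difference, $\Gamma$ is closed under continuous preimages, and hence so is $\check\Gamma$ by complementation. The matter then reduces to a single statement, namely that $\Gamma$ is \emph{not self-dual} on $\{0,1\}^\NN$. Granting this, suppose toward a contradiction that $N_1 \setminus N_2 \in \check\Gamma$. For any $\Gamma$ set $Y \subseteq \{0,1\}^\NN$, completeness (Theorem~\ref{T1}) supplies a continuous $f$ with $Y = f^{-1}(N_1 \setminus N_2)$, so $Y \in \check\Gamma$ by the closure just noted. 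Hence every $\Gamma$ subset of $\{0,1\}^\NN$ is $\check\Gamma$, and complementing this inclusion gives the reverse one, so $\Gamma$ would be self-dual on $\{0,1\}^\NN$ --- contradicting non-self-duality. Therefore $N_1 \setminus N_2 \notin \check\Gamma$, as desired.

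It remains to establish non-self-duality of $\Gamma$, which I regard as the only genuine content of the proof, and I would obtain it by the usual universal-set diagonalization. Fix a set $\mathcal V \subseteq \{0,1\}^\NN \times \{0,1\}^\NN$ universal for the $\mathbf\Pi^0_3$ subsets of $\{0,1\}^\NN$, and, using a homeomorphism $\{0,1\}^\NN \cong \{0,1\}^\NN \times \{0,1\}^\NN$ to split a parameter $e$ as $(e_0,e_1)$, set $\mathcal U_e = \mathcal V_{e_0} \setminus \mathcal V_{e_1}$. Then $\mathcal U$ is a $\Gamma$ subset of $\{0,1\}^\NN \times \{0,1\}^\NN$ that is universal for the $\Gamma$ subsets of $\{0,1\}^\NN$. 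The diagonal $\Delta = \{ x : (x,x) \in \mathcal U\}$ is a continuous preimage of $\mathcal U$, hence lies in $\Gamma$. Were $\Gamma$ self-dual, the complement $\Delta^{\mathrm c}$ would also lie in $\Gamma$, so $\Delta^{\mathrm c} = \mathcal U_{e^*}$ for some parameter $e^*$; evaluating at $x = e^*$ gives $e^* \in \Delta^{\mathrm c} \Leftrightarrow (e^*,e^*) \in \mathcal U \Leftrightarrow e^* \in \Delta$, a contradiction.

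The one point requiring care is verifying that $\mathcal U$ is truly universal for $\Gamma$ --- that is, that every $A \setminus B$ with $A,B \in \mathbf\Pi^0_3$ appears as some $\mathcal U_e$ --- which follows from the universality of $\mathcal V$ for $\mathbf\Pi^0_3$ together with the fact that the difference operation is applied coordinatewise in the split parameter; the rest is bookkeeping with continuous preimages. Alternatively, one may simply cite the non-self-duality of the finite levels of the difference hierarchy over $\mathbf\Pi^0_3$ from the general theory (Kechris \cite{KECHRIS.dst}), in which case the corollary becomes an immediate consequence of Theorem~\ref{T1}.
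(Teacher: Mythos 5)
Your proof is correct. The paper offers no explicit proof of this corollary---it is stated as an immediate consequence of Theorem~\ref{T1}---and the standard argument it implicitly relies on is precisely the one you spell out: $N_1 \setminus N_2 \in \mathcal D_2(\mathbf\Pi^0_3)$, and by non-self-duality of $\mathcal D_2(\mathbf\Pi^0_3)$ (universal set plus diagonalization, with closure under continuous preimages) a $\mathcal D_2(\mathbf\Pi^0_3)$-complete set cannot lie in the dual class.
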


The method of our proof is somewhat different from that of Ki-Linton.  Specifically, we employ a permitting structure to construct a reduction of an arbitrary $\mathcal D_2 (\mathbf \Pi^0_3)$ set to $N_1 \setminus N_2$.  Our task is necessarily complicated by the fact that there are not many combinatorially tractable sets which are known to be $\mathcal D_2 (\mathbf \Pi^0_3)$-complete.

In response to a question posed, in conversation, by Su Gao, we extended the method used to prove Theorem~\ref{T1} and obtained the following result.

\begin{theorem}\label{T2}
The set $\bigcup_k N_{2k+1} \setminus N_{2k+2}$ is $\mathcal D_\omega (\mathbf \Sigma^0_3)$-complete.
\end{theorem}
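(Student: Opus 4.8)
The plan is to extend the permitting construction used for Theorem~\ref{T1} from a single difference to an $\omega$-difference. First I would record that the target set already lies in the class: each $N_m$ is $\mathbf\Pi^0_3$ and the sequence is decreasing, $N_1\supseteq N_2\supseteq\cdots$ (order-$(m+1)$ normality implies order-$m$ normality, since the limiting frequency of a length-$m$ block is the sum of the frequencies of its two length-$(m+1)$ extensions), so $\bigcup_k N_{2k+1}\setminus N_{2k+2}$ has exactly the form of a $\mathcal D_\omega$ set over a decreasing sequence of $\mathbf\Pi^0_3$ sets; passing to complements and re-indexing identifies this with the standard presentation of $\mathcal D_\omega (\mathbf\Sigma^0_3)$. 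Thus the content of the theorem is hardness, and it suffices to produce, for an arbitrary $Y\in\mathcal D_\omega (\mathbf\Sigma^0_3)$ with $Y\subseteq\CS$, a continuous $f\colon\CS\to[0,1]$ with $f^{-1}\bigl(\bigcup_k N_{2k+1}\setminus N_{2k+2}\bigr)=Y$.

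Fix a presentation $Y=\bigcup_k A_{2k+1}\setminus A_{2k+2}$, where $A_1\supseteq A_2\supseteq\cdots$ are $\mathbf\Pi^0_3$ with fixed presentations $A_m=\bigcap_i\bigcup_j C^m_{i,j}$, each $C^m_{i,j}$ closed. For $\alpha\in\CS$ let $L(\alpha)$ be the drop-out level, the largest $m$ with $\alpha\in A_m$ (with the usual conventions when $\alpha\notin A_1$ or $\alpha\in\bigcap_m A_m$); since the $A_m$ are nested, $\alpha\in Y$ iff $L(\alpha)$ is finite and odd. The construction will arrange that, for the real $f(\alpha)$ and for every $m\ge1$,
\[
f(\alpha)\in N_m\iff\alpha\in A_m,
\]
so that the order-of-normality of $f(\alpha)$ equals $L(\alpha)$. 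Because $N_{m+1}\subseteq N_m$, a failure of order-$m$ normality forces failure of every higher order, which is exactly the monotonicity needed to match the nested $A_m$ and to translate the parity of $L(\alpha)$ into membership in $\bigcup_k N_{2k+1}\setminus N_{2k+2}$.

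The combinatorial heart is a hierarchy of building blocks. For each $m$ I would establish two families of finite binary words: \emph{normalizing blocks}, which when appended drive the empirical frequency of every length-$\ell$ pattern ($\ell\le m$) toward $2^{-\ell}$; and \emph{spoiling blocks}, which keep all length-$\ell$ frequencies for $\ell\le m$ balanced while holding some length-$(m+1)$ frequency bounded away from $2^{-(m+1)}$. The point is hierarchical compatibility: spoiling order $m+1$ must be achievable with a provable, quantitative guarantee that order-$\le m$ normality is undisturbed, with error bounds that telescope across stages. I would then define $f(\alpha)$ by a staged permitting construction: at stage $s$ I consult the $\mathbf\Pi^0_3$ approximations of the conditions $\forall i\,\exists j\,(\alpha\in C^m_{i,j})$ for $m\le s$, maintain a current believed level $\ell_s(\alpha)$, and append a block that normalizes up through level $\ell_s(\alpha)$ while spoiling at level $\ell_s(\alpha)+1$. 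A priority assignment ensures that the requirement for level $m$ is permitted cofinally, and ultimately acts, precisely when $\alpha\in A_m$, and is injured only finitely often.

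The main obstacle is the simultaneous control of infinitely many normality orders in this staged construction. Because fixing the statistics of length-$m$ blocks constrains the statistics of shorter blocks, the actions taken to spoil order $m+1$ for a candidate drop-out level threaten the convergence already secured at lower orders, and the believed level $\ell_s(\alpha)$ may change infinitely often during the approximation. I must therefore make the block lengths grow fast enough that each new block dominates the accumulated imbalance of its predecessors (so that the genuine limit, not merely a $\liminf$ or $\limsup$, is controlled and the condition ``$\forall\epsilon\,\exists N\,\forall n\ge N$'' is met), yet retain enough spoiling margin at the true drop-out level that non-normality there is permanent. Verifying that these competing length and margin requirements can be met at every level at once, and that continuity of $f$ survives (each $f(\alpha)\upto s$ depending only on finitely much of $\alpha$), is where the bulk of the quantitative work lies; the remaining verification that the displayed equivalence holds for all $m$, whence $f$ is the desired reduction, is then routine.
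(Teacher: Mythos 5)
Your overall architecture is the same as the paper's: a stagewise construction, driven by the closed-set approximations to the $\mathbf\Pi^0_3$ presentations, that appends normalizing blocks and spoiling blocks so as to achieve $f(\alpha)\in N_m\iff\alpha\in A_m$ for every $m$ (the paper likewise proves $x\in F_k\iff 0\,.\,f(x)\in N_k$). However, there is a genuine gap at exactly the point you flag as ``the bulk of the quantitative work,'' and the mechanism you propose there would fail. The problem is that injuries are necessarily infinite, not finite. Write $A_m=\bigcap_i\bigcup_j C^m_{i,j}$. Even when $\alpha\in A_m$, each section $i$ can produce finitely many witness-killing events, and since infinitely many sections come under consideration as stages advance, the total number of such events is in general infinite. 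Consequently your believed level $\ell_s(\alpha)$ drops below the true drop-out level $L(\alpha)$ infinitely often, and your claim that a priority assignment makes each requirement ``injured only finitely often'' cannot be realized; any correct construction must tolerate infinitely many false alarms. Under your scheme --- spoil at level $\ell_s(\alpha)+1$ with a block holding some length-$(\ell_s+1)$ frequency a fixed distance from $2^{-(\ell_s+1)}$, with block lengths growing so fast that each block dominates the accumulated imbalance of its predecessors --- every false alarm at a level $m+1\le L(\alpha)$ is fully effective in the limit statistics, so some length-$(m+1)$ frequency is pushed a fixed distance from its ideal value infinitely often, destroying order-$(m+1)$ normality even though $\alpha\in A_{m+1}$. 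The fast-growth requirement makes this worse, not better: it guarantees that each spurious spoiling action is visible in the limit.

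The missing idea, which is how the paper resolves this, is to grade the spoiling response by the index of the section that triggered it, using a one-sided threshold rather than a fixed two-sided margin. The paper's spoiling block $\tau_{k,m,n}$, used when section $m$ of the approximation to $F_k$ fails at stage $\langle k,m,n\rangle$, satisfies simultaneously: (i) every pattern of length $\le k+m$ has limiting frequency within $2^{-(k+m)}$ of ideal, and (ii) the frequency of a designated pattern $\alpha_k$ stays below a fixed $r_{k,m}<2^{-k}$; these are consistent because $r_{k,m}$ is chosen in the interval $(2^{-k}-2^{-(k+m)},\,2^{-k})$. Then false alarms for a set containing the point come from sections whose indices tend to infinity, so their perturbations are at most $2^{-(k+m)}$ and vanish, while a genuine failure means one fixed section $(k_0,m_0)$ fires infinitely often, driving the $\alpha_{k_0}$-frequency below the fixed number $r_{k_0,m_0}<2^{-k_0}$ infinitely often --- which already precludes order-$k_0$ normality no matter how small $2^{-k_0}-r_{k_0,m_0}$ is. Without this grading device (together with the modulus-of-distribution bookkeeping the paper uses to control transients between consecutive blocks), the equivalence $f(\alpha)\in N_m\iff\alpha\in A_m$ cannot be verified, and your construction as described gives wrong answers on points of $A_{m+1}$ whose approximations generate infinitely many false alarms.
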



\section{Preliminaries and notation}

We now introduce some notation which largely follows Kechris \cite{KECHRIS.dst}, our principal reference for descriptive set theory.

Let $\langle \cdot , \cdot \rangle : \NN^2 \rightarrow \NN$ be a fixed bijective pairing function such that, for fixed $m \in \NN$, the sequence $\langle m , 0\rangle , \langle m , 1 \rangle , \ldots$ is increasing.  Likewise, we fix a bijective ``triple function'' $\langle \cdot , \cdot , \cdot \rangle : \NN^3 \rightarrow \NN$.

Let $\{0,1\}^n$ denote the set of finite binary strings of length $n$, $\{0,1\}^{\leq n}$ denote the set of binary strings of length not greater than $n$, and $\{0,1\}^{<\omega}$ denote the set of all finite binary strings (of all lengths).  Let $\{0,1\}^\NN$ denote the set of all infinite binary sequences, equipped with the product, over $\NN$, of the discrete topology on $\{0,1\}$.  For $x \in \{0,1\}^\NN$, let $x(n)$ denote the $n$th term of $x$ and let $x \upto n$ denote the finite string $(x(0) , \ldots , x(n-1))$.  For $\sigma \in \{0,1\}^{< \omega}$, let $[\sigma]$ denote the basic open set \[
\{ x \in \{0,1\}^\NN : \sigma \mbox{ is an initial segment of } x\}.
\]
For $\sigma , \tau \in \{0,1\}^{< \omega}$, let $\sigma \concat \tau$ denote the concatenation of $\sigma$ and $\tau$.  For $\alpha \in \{0,1\}^{< \omega}$, let $\alpha^n$ denote the $n$-fold concatenation of $\alpha$ with itself.  Similarly, let $\alpha^\infty$ denote the infinite binary sequence $\alpha \concat \alpha \concat \ldots$.  For $\sigma \in \{0,1\}^{< \omega}$, let $|\sigma|$ denote the length of $\sigma$.

For $a \in \ZZ$ and $x \in \{0,1\}^\NN$, let $a \, . \, x$ denote the number
\[
a + \sum_{n = 0}^\infty x(n) / 2^{n+1}.
\]
That is, $a \, . \, x$ is the least real number greater than or equal to $a$ whose fractional part has binary expansion $x$.

If $\alpha$ and $\sigma$ are finite binary strings, with $|\alpha \| \leq |\sigma|$, let $d_\alpha (\sigma)$ be
\[
\frac{\mathrm{Cardinality}\big(\{ i <  |\sigma| - |\alpha| : (\exists \beta \in \{0,1\}^{< \omega}) (\sigma \upto (i + |\alpha|) = \beta \concat \alpha)\}\big)}{|\sigma|}.
\]
In other words, $d_\alpha (\sigma)$ indicates the proportion of substrings of $\sigma$ which are equal to $\alpha$.

For the rest of this paper, we will use the following well-known equivalent definition of order-$k$ normality in base 2, rather than that introduced in the previous section.

\begin{definition}
A real number, $a \, . \, x$, is {\em order-$k$ normal} in base 2 iff, for each $\alpha \in \{0,1\}^k$, the sequence $(d_\alpha (x\upto s))_{s \in \NN}$ is convergent, with
\[
\lim_{s \rightarrow \infty} d_\alpha (x \upto s) = 2^{-k}.
\]
We let $N_k$ denote the set of all order-$k$ normal numbers in $[0,1]$.
\end{definition}


Proving that this definition is equivalent to the one given earlier is a relatively straightforward matter. (See Kuipers-Niederreiter \cite{kuipers.niederreiter}, Chapter 1, exercise 8.7.)


\section{The proof of Theorem~\ref{T1}}

Let $L = \bigcap_m \bigcup_n L_{m,n}$ and $F = \bigcap_m \bigcup_n F_{m,n}$ be fixed $\mathbf \Pi^0_3$ subsets of $\{0,1\}^\NN$, with the $L_{m,n}$ and $F_{m,n}$ all closed sets.  With no loss of generality, we may assume that, for each $m$,
\[
L_{m,0} \subseteq L_{m,1} \subseteq \ldots \qquad \mbox{and} \qquad F_{m,0} \subseteq F_{m,1}\subseteq \ldots.
\]
Also, since we will be considering the difference set $L\setminus F$, we may assume that $L \supseteq F$.  Were this not so, we could replace $F$ with $L \cap F$.  We now proceed to define a continuous map $f : \{0,1\}^\NN \rightarrow \mathbb R$ such that $f^{-1} (N_1 \setminus N_2) = L\setminus F$.

In the first place, let 
\[
\alpha_n = (0110)^n \concat (10)
\]
and
\[
\beta_n = (0110)^n \concat 0.
\]
Observe that 
\[
\lim_{k \rightarrow \infty} d_{10} (\alpha_n^\infty \upto k) = (n+1)/(4n+2) > 1/4
\]
and
\[
\lim_{k \rightarrow \infty} d_0 (\beta_n^\infty \upto k) = (2n+1)/(4n+1) > 1/2.
\]
Also, if $y \in \RR$ is of the form $0\, . \, \alpha_{i_0}^{a_0} \concat \beta_{j_0}^{b_0} \concat \alpha_{i_1}^{a_1} \concat \beta_{j_1}^{b_1} \concat \ldots$, then $y \in N_1$ if $j_p \rightarrow \infty$, as $p \rightarrow \infty$, since the $\alpha$'s do not affect the density of 0's and 1's in the binary expansion of $y$.  In addition, if  both $i_p , j_p \rightarrow \infty$, as $p \rightarrow \infty$, then $y \in N_2$.  This follows from the fact that the real number $0.01100110\ldots$ is order-2 normal and inserting a density-zero set of digits does not affect normality.

Given $x \in \{0,1\}^\NN$, we will let 
\[
f(x) = 0\, . \, \alpha_{i_0}^{a_0} \concat \beta_{j_0}^{b_0} \concat\alpha_{i_1}^{a_1} \concat \beta_{j_1}^{b_1} \concat \ldots,
\]
where $i_p$, $j_p$, $a_p$ and $b_p$ are natural numbers, defined as follows, for each $p \in \NN$:
\[
i_p = \begin{cases}
i_{p-1} + 1 & \mbox{if } p = \langle m,n \rangle, \ (\forall n' < n)([x \upto \langle m, n-1 \rangle] \cap F_{m,n'} \neq \emptyset \\ &\implies [x \upto p] \cap F_{m,n'} \neq \emptyset) \mbox{ and } [x \upto p] \cap F_{m,n} \neq \emptyset,\\

m &\mbox{if } p = \langle m,n \rangle \mbox{ and } ((\exists n' < n)([x \upto \langle m,n-1 \rangle] \cap F_{m,n'} \neq \emptyset \\ 
&\mbox{and } [x \upto p] \cap F_{m,n'} = \emptyset) \mbox{ or } [x \upto p] \cap F_{m,n} = \emptyset)
\end{cases}
\] 
We refer to the two cases in this definition as {\em Case 1} and {\em Case 2}.  The definition of $j_p$ is identical to that of $i_p$, except with the $L_{m,n}$ in place of the $F_{m,n}$.  We let
\[
a_p = \begin{cases}
1 &\mbox{if $p = \langle m,n \rangle$ and Case 1 from the definition of $i_p$ holds,}\\
k &\mbox{if $p = \langle m,n \rangle$ and Case 2 from the definition of $i_p$ holds,}\\ &\mbox{where $k$ is chosen to be large enough that}\\
&d_{10} (\alpha_{i_0}^{a_0} \concat \beta_{j_0}^{b_0} \concat\alpha_{i_1}^{a_1} \concat \ldots \concat \beta_{j_{p-1}}^{b_{p-1}} \concat \alpha_{i_p}^k) \geq (m+(3/4))/(4m+2).
\end{cases}
\]
We may alway find such a $k$ in Case 2, since $i_p = m$ and, therefore,
\[
\lim_{k \rightarrow \infty} d_{10} (\sigma \concat \alpha_m^\infty \upto k) = (m+1)/(4m+2),
\]
for any $\sigma \in \{0,1\}^{< \omega}$.  Similarly, we define
\[
b_p = \begin{cases}
1 &\mbox{if $p = \langle m,n \rangle$ and Case 1 from the definition of $j_p$ holds,}\\
k &\mbox{if $p = \langle m,n \rangle$ and Case 2 from the definition of $j_p$ holds,}\\ &\mbox{where $k$ is chosen to be large enough that}\\
&d_{0} (\alpha_{i_0}^{a_0} \concat \beta_{j_0}^{b_0} \concat\alpha_{i_1}^{a_1} \concat \ldots \concat \beta_{j_{p-1}}^{b_{p-1}} \concat \alpha_{i_p}^{a_p} \concat \beta_{j_p}^{k}) \geq (2m+(3/4))/(4m+1).
\end{cases}
\]
Again, we may always find such a $k$, since $j_p = m$ and, hence,
\[
\lim_{k \rightarrow \infty} d_0 (\sigma\concat \beta_m^\infty \upto k) = (2m+1)/(4m+1),
\]
for any $\sigma \in \{0,1\}^{< \omega}$.

\medskip

\noindent{\scshape Verification.}  The claims below will complete the proof.  In particular, they will establish that $f^{-1} (N_1 \setminus N_2) = L\setminus F$.

\begin{claim}
The map $f : \{0,1\}^\NN \rightarrow \RR$ is continuous.
\end{claim}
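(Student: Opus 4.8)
The plan is to show that small changes in the input $x$ produce small changes in the output $f(x)$, where smallness of changes in $f(x)$ is measured in the usual metric on $\RR$. Since $f(x)$ is defined as $0\, .\, y$ where $y$ is the infinite concatenation $\alpha_{i_0}^{a_0} \concat \beta_{j_0}^{b_0} \concat \alpha_{i_1}^{a_1} \concat \ldots$, and the map $y \mapsto 0\, .\, y$ from $\{0,1\}^\NN$ to $\RR$ is itself continuous (indeed $1$-Lipschitz with respect to the standard metric on $\{0,1\}^\NN$), it suffices to verify that the map $x \mapsto y$ from $\{0,1\}^\NN$ to $\{0,1\}^\NN$ is continuous. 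For this I would argue that each block $\alpha_{i_p}^{a_p} \concat \beta_{j_p}^{b_p}$ contributes a finite, fixed string to $y$, and that the choice of this block depends only on a finite initial segment of $x$.

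The key observation is that the quantities $i_p$, $j_p$, $a_p$ and $b_p$ are all determined by $x \upto (p+1)$ (in fact by $x \upto \langle m, n \rangle$ and a few earlier coordinates, where $p = \langle m, n \rangle$). Indeed, inspecting the definitions: the case distinction for $i_p$ is governed entirely by the conditions $[x \upto \langle m, n-1\rangle] \cap F_{m,n'} \neq \emptyset$ and $[x \upto p] \cap F_{m,n} \neq \emptyset$, each of which depends only on the finite string $x \upto p$; likewise for $j_p$ with the $L_{m,n}$ in place of the $F_{m,n}$. The exponents $a_p$ and $b_p$ are then determined by $i_p$, $j_p$ and the previously-fixed blocks, via a density condition that selects a specific (finite) value of $k$. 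Thus, for each $p$, the block $\alpha_{i_p}^{a_p} \concat \beta_{j_p}^{b_p}$ is a finite string determined by $x \upto (p+1)$.

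To formalize continuity, I would fix $x \in \{0,1\}^\NN$ and an arbitrary $\epsilon > 0$, and choose $P$ large enough that the total length of the first $P$ blocks exceeds $-\log_2 \epsilon$ (so that agreement of $y$ on these blocks forces $|0\, .\, y - 0\, .\, y'| < \epsilon$). Setting $N = \langle m, n\rangle + 1$ where $\langle m, n \rangle$ is the largest index among the first $P$ blocks, any $x'$ with $x' \upto N = x \upto N$ will produce the same first $P$ blocks, hence an output $f(x')$ within $\epsilon$ of $f(x)$. This establishes continuity at $x$, and since $x$ was arbitrary, on all of $\{0,1\}^\NN$.

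The main obstacle, though a routine one, is the bookkeeping required to confirm that $a_p$ and $b_p$ are genuinely \emph{finite} and depend on only finitely much data. The definitions guarantee a suitable finite $k$ exists in Case 2 because the relevant density limits strictly exceed the target thresholds $(m+(3/4))/(4m+2)$ and $(2m+(3/4))/(4m+1)$; one must note that once $i_p$, $j_p$ and the earlier blocks are fixed, the chosen $k$ is a concrete natural number (say, the least $k$ satisfying the density inequality), so no infinite search leaks into the definition of $f(x \upto N)$. With that confirmed, the finite-dependence argument goes through cleanly and continuity follows.
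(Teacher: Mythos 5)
Your proof is correct and takes essentially the same approach as the paper: both hinge on the observation that $i_p$, $j_p$, $a_p$, $b_p$ (hence each block $\alpha_{i_p}^{a_p}\concat\beta_{j_p}^{b_p}$ of the expansion) are determined by a finite initial segment of $x$, so agreement of inputs on a long initial segment forces agreement of outputs on many binary digits. The paper states this as the explicit modulus $|f(x)-f(y)|\leq 2^{-p}$ whenever $x\upto p = y\upto p$, whereas you run the equivalent $\epsilon$--$N$ argument (factoring through the $1$-Lipschitz map $y \mapsto 0\,.\,y$); the difference is purely presentational.
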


\begin{proof}[Proof of claim]
The continuity of $f$ follows from the observation that $i_0 , \ldots , i_p$, \newline $j_0 , \ldots , j_p$, $a_0 , \ldots , a_p$ and $b_0 , \ldots , b_p$ are all determined by the first $p$ terms of $x$.  In particular, at least the first $p$ digits of the binary expansion of $f(x)$ are determined by the first $p$ terms of $x$.  Thus, if $x , y \in \{0,1\}^\NN$ are such that $x \upto p = y \upto p$,
\[
|f(x) - f(y)| \leq 2^{-p}.
\]
Thus, $f$ is continuous.
\end{proof}

At this juncture, we introduce some convenient terminology.  For fixed $x \in \{0,1\}^\NN$, we say that $m \in \NN$ {\em acts infinitely often} for the $i_p$ (resp., $j_p$) iff there exist infinitely many $n$ such that Case 2 holds for $p = \langle m,n \rangle$, in the definition of $i_p$ (resp., $j_p$).  Otherwise, we say that $m$ {\em acts finitely often} for the $i_p$ (resp., $j_p$).  Also, note that $i_p \rightarrow \infty$, as $p \rightarrow \infty$, iff each $m \in \NN$ acts only finitely often for the $i_p$.  Likewise, for the $j_p$.  Although this terminology refers implicitly to a specific $x$, we will supress mention of $x$, since $x$ will always be fixed in what follows.

\begin{claim}
If $x \in F$, then $f(x) \in N_2$.
\end{claim}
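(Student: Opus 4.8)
The plan is to reduce the claim to showing that, for $x \in F$, both $i_p \to \infty$ and $j_p \to \infty$ as $p \to \infty$; the observation recorded before the definition of $f$ (that $i_p , j_p \to \infty$ forces the corresponding number to lie in $N_2$) then finishes the argument. By the equivalence noted just above this claim---that $i_p \to \infty$ iff every $m$ acts only finitely often for the $i_p$, and likewise for the $j_p$---it suffices to prove that each $m \in \NN$ acts finitely often. I would establish this for the $i_p$ using only $x \in F$. Since we have assumed $F \subseteq L$, the hypothesis $x \in F$ also yields $x \in L$, and the identical argument with the $L_{m,n}$ in place of the $F_{m,n}$ gives $j_p \to \infty$.

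Fix $m$. Because $x \in F \subseteq \bigcup_n F_{m,n}$ and the $F_{m,n}$ increase, there is some $n_0$ with $x \in F_{m,n}$ for all $n \geq n_0$. The goal is to show that Case 2 in the definition of $i_p$ holds for only finitely many $n$ with $p = \langle m,n\rangle$. Case 2 occurs exactly when one of two conditions fails: call (a) the permitting implication ranging over all $n' < n$, and (b) the requirement $[x\upto p] \cap F_{m,n} \neq \emptyset$. Condition (b) is easy to dispatch: whenever $n \geq n_0$ the point $x$ itself witnesses $x \in [x\upto p] \cap F_{m,n}$, so (b) holds; hence (b) can fail only for the finitely many $n < n_0$.

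The main point is to bound the failures of (a), and here I expect the only genuine subtlety to lie. I would use that each $F_{m,n'}$ is closed: since $[x\upto(q+1)] \subseteq [x\upto q]$, the truth value of $[x\upto q] \cap F_{m,n'} \neq \emptyset$ is monotone non-increasing in $q$, so once this intersection becomes empty it stays empty for all larger $q$. Thus the set of $q$ with $[x\upto q] \cap F_{m,n'} \neq \emptyset$ is an initial segment of $\NN$, and it is all of $\NN$ exactly when $x \in F_{m,n'}$ (if $x \notin F_{m,n'}$, closedness provides a neighborhood of $x$ disjoint from $F_{m,n'}$, forcing emptiness for large $q$). For $n' \geq n_0$ we have $x \in F_{m,n'}$, so this intersection is never empty and such $n'$ never violate (a). For each $n' < n_0$ there is at most one value of $q$ at which the intersection passes from nonempty to empty; since the checkpoints satisfy $\langle m,n-1\rangle < \langle m,n\rangle$ and increase strictly with $n$, this single transition can make (a) fail for at most one value of $n$. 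As there are only $n_0$ such indices $n'$, condition (a) fails for at most finitely many $n$.

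Combining the bounds on (a) and (b), Case 2 holds for only finitely many $n$, so $m$ acts finitely often; as $m$ was arbitrary, $i_p \to \infty$. The identical reasoning with the $L_{m,n}$, applied to $x \in L$, gives $j_p \to \infty$. Invoking the earlier observation, $f(x) \in N_2$. The crux is the treatment of the permitting condition (a): one must recognize that closedness of the $F_{m,n'}$ makes emptiness of $[x\upto q] \cap F_{m,n'}$ monotone in $q$, and then exploit that $x \in F$ confines all possible transitions to the finitely many indices $n' < n_0$; the remainder is bookkeeping with the definitions of $i_p$ and $j_p$.
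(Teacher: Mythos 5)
Your proposal is correct and follows essentially the same route as the paper: reduce to showing $i_p, j_p \to \infty$, hence to showing each $m$ acts only finitely often, then fix $m$, take $n_0$ with $x \in F_{m,n}$ for all $n \geq n_0$, and use closedness of the finitely many $F_{m,n'}$ not containing $x$ to control the permitting clause (the paper handles this by enlarging $n_0$ so the antecedent of the implication fails thereafter, while you count that each such $n'$ can cause at most one failure---the same underlying monotonicity idea, and if anything your bookkeeping is slightly more precise than the paper's wording). Your derivation of $j_p \to \infty$ from $F \subseteq L$ also matches the paper (which contains the typo ``$x \in L \subseteq F$'' where $F \subseteq L$ is meant).
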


\begin{proof}[Proof of claim]
As noted above, it will suffice to show that $i_p \rightarrow \infty$ and $j_p \rightarrow \infty$.  In turn, it will suffice to show that each $m$ acts only finitely many times for both the $i_p$ and the $j_p$.  Indeed, suppose $x \in F$ and fix $m \in \NN$.  Let $n_0$ be such that that $x \in F_{m,n_0}$.  It follows that $x \in F_{m,n}$, for each $n \geq n_0$, since $F_{m,0} \subseteq F_{m,1} \subseteq \ldots$.  Hence, $[x \upto p] \cap F_{m,n} \neq \emptyset$, for all $p \geq \langle m,n_0\rangle$ and $n \geq n_0$.  We may also assume $n_0$ is large enough that, if $n \geq n_0$ and $n' < n_0$, we have $[x \upto \langle m , n-1 \rangle] \cap F_{m,n'} = \emptyset$.  In particular, we are in Case 1 of the definition of $i_p$, provided $p = \langle m,n \rangle$, with $n \geq n_0$.  Thus, $m$ acts only finitely many times for the $i_p$.

We omit the corresponding argument for the $j_p$, as it is entirely analogous, using the fact that $x \in L \subseteq F$.  This completes the proof of the claim.
\end{proof}

\begin{claim}
If $x \in L \setminus F$, then $f(x) \in N_1 \setminus N_2$.
\end{claim}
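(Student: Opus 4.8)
The plan is to verify the two conditions defining $N_1\setminus N_2$ separately: that $x\in L$ forces $f(x)\in N_1$, and that $x\notin F$ forces $f(x)\notin N_2$. Both reduce to the behaviour of the indices. Specifically, I will use that $j_p\to\infty$ makes the density of $0$'s in the binary expansion of $f(x)$ tend to $1/2$ (so that $f(x)\in N_1$), and that $i_p\not\to\infty$ keeps the density of the block $10$ bounded away from $1/4$ (so that $f(x)\notin N_2$). These are exactly the correspondences recorded when $f$ was defined and in the terminology introduced after the first claim.

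For the first condition, note that $x\in L=\bigcap_m\bigcup_n L_{m,n}$, and that the recursion for $j_p$ is obtained from that for $i_p$ by replacing $F_{m,n}$ with $L_{m,n}$. Hence the argument of the previous claim applies word for word: for each $m$, choosing $n_0$ with $x\in L_{m,n_0}$, Case~1 holds at all stages $p=\langle m,n\rangle$ with $n$ sufficiently large, so every $m$ acts only finitely often for the $j_p$ and thus $j_p\to\infty$. By the remark made when $f$ was defined---the $\alpha$-blocks have density of $0$'s exactly $1/2$ and the $\beta_{j_p}$-blocks have density of $0$'s tending to $1/2$ as $j_p\to\infty$---this yields $f(x)\in N_1$.

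For the second condition, fix $m^*$ witnessing $x\notin F$, so that $x\notin F_{m^*,n}$ for every $n$ (using that the $F_{m^*,n}$ increase). The crux is to show that $m^*$ acts infinitely often for the $i_p$, whence $i_p\not\to\infty$. I would argue by contradiction: suppose $m^*$ acts only finitely often, so there is $N_0$ with Case~1 holding at every stage $p=\langle m^*,n\rangle$, $n\geq N_0$. Writing $p_n=\langle m^*,n\rangle$, Case~1 at stage $n$ supplies two facts: that $[x\upto p_n]\cap F_{m^*,n}\neq\emptyset$, and the persistence clause, that for each $n'<n$ the condition $[x\upto p_{n-1}]\cap F_{m^*,n'}\neq\emptyset$ implies $[x\upto p_n]\cap F_{m^*,n'}\neq\emptyset$. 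Taking $n'=N_0$, the first fact gives $[x\upto p_{N_0}]\cap F_{m^*,N_0}\neq\emptyset$, and applying persistence inductively at $n=N_0+1,N_0+2,\dots$ propagates this to $[x\upto p_n]\cap F_{m^*,N_0}\neq\emptyset$ for all $n\geq N_0$. Since $[x\upto p]$ shrinks as $p$ grows and $p_n\to\infty$, non-emptiness along arbitrarily long initial segments forces $[x\upto p]\cap F_{m^*,N_0}\neq\emptyset$ for every $p$; as $F_{m^*,N_0}$ is closed, this gives $x\in F_{m^*,N_0}$, contradicting the choice of $m^*$.

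Consequently $m^*$ acts infinitely often, so there are infinitely many stages $p=\langle m^*,n\rangle$ in Case~2, at which $i_p=m^*$ and $a_p$ is chosen so large that the density $d_{10}$ of the prefix of the expansion of $f(x)$ ending in $\alpha_{m^*}^{a_p}$ is at least $(m^*+3/4)/(4m^*+2)>1/4$. Thus the partial densities $d_{10}$ of initial segments of $f(x)$ have a subsequence bounded away from $1/4$, so they do not converge to $1/4$ and $f(x)\notin N_2$. Combined with $f(x)\in N_1$ from the first part, this gives $f(x)\in N_1\setminus N_2$, as required. I expect the persistence argument of the third paragraph to be the only delicate point; the remaining steps are either routine density estimates or direct appeals to facts already established.
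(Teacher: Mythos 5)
Your proof is correct, but the key step is organized differently from the paper's. The first half (deducing $f(x)\in N_1$ by applying the previous claim's argument to the $j_p$ and $L$) and the closing density step (each Case~2 stage at column $m^*$ forces $d_{10}\geq (m^*+3/4)/(4m^*+2)>1/4$ along a prefix, and this happens infinitely often with $m^*$ fixed, so $d_{10}$ cannot converge to $1/4$) coincide with the paper. Where you diverge is in showing that $m^*$ acts infinitely often for the $i_p$. The paper argues directly: it sets $k_n$ to be the least stage with $[x\upto k_n]\cap F_{m^*,n}=\emptyset$ (such a stage exists since $F_{m^*,n}$ is closed and omits $x$) and splits into two cases --- infinitely many $r$ with $k_r>\langle m^*,r\rangle$, or $k_n\leq\langle m^*,n\rangle$ cofinitely --- exhibiting in each case infinitely many explicit stages at which one of the two disjuncts of Case~2 is triggered. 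You instead argue by contradiction: if Case~1 held at every stage $\langle m^*,n\rangle$ with $n\geq N_0$, then the persistence clause of Case~1, applied inductively with $n'=N_0$, propagates $[x\upto\langle m^*,n\rangle]\cap F_{m^*,N_0}\neq\emptyset$ to all $n\geq N_0$; since the neighborhoods $[x\upto p]$ shrink to $\{x\}$ and $F_{m^*,N_0}$ is closed, this puts $x\in F_{m^*,N_0}$, contradicting the choice of $m^*$. The two arguments use closedness of the $F_{m^*,n}$ in dual ways --- the paper positively, to define the $k_n$; you contrapositively, via the fact that a closed set contains its closure points --- and yours is shorter and makes the design of the permitting structure transparent (Case~1 holding forever at column $m^*$ certifies $x\in\bigcup_n F_{m^*,n}$), whereas the paper's is constructive, exhibiting the Case~2 stages explicitly. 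Both are complete proofs of the claim.
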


\begin{proof}[Proof of claim]
Fix $x \in L\setminus F$.  As in the previous claim, each $m$ acts only finitely many times for the $j_p$.  It follows that $j_p \rightarrow \infty$ and hence $f(x) \in N_1$.  On the other hand, we shall see that some $m$ acts infinitely often for the $i_p$.  Indeed, since $x \notin F$, there exists $m$ such that $x \notin F_{m,n}$, for all $n$.  For each $n$, let $k_n$ be least such that $[x \upto k_n] \cap F_{m,n} = \emptyset$.  Note that $k_0 \leq k_1 \leq \ldots$, since $F_{m,0} \subseteq F_{m,1} \subseteq \ldots$.

We consider two cases.  In the first instance, suppose that there are infinitely many $r$ such that $k_r > \langle m,r \rangle$.  We may therefore select $r_0 < r_1 < \ldots$ and $n_0 < n_1 < \ldots$ such that
\begin{itemize}
\item $(\forall e) (r_e < n_e)$ and
\item $(\forall e) (\langle m , n_e -1 \rangle < k_{r_e} \leq \langle m, n_e \rangle$.
\end{itemize}
Thus, for each $p = \langle m , n_e \rangle$, there will be an $n' < n_e$ (namely $n' = r_e$) such that $[x \upto \langle m, n_e -1 \rangle] \cap F_{m,n'} \neq \emptyset$ (since $\langle m,n_e -1 \rangle < k_{r_e}$), but $[x \upto \langle m, n_e \rangle] \cap F_{m,n'} = \emptyset$ (since $\langle m,n_e\rangle \geq k_{r_e}$).  It follows that, for each $p = \langle m,n_e\rangle$, we will be in Case 2 of the definition of $i_p$.


In the second case, we assume that $k_n \leq \langle m,n \rangle$, for all but finitely many $n$.  Thus, by the definition of the $k_n$, we have that $[x \upto \langle m,n\rangle ] \cap F_{m,n} = \emptyset$, for all but finitely many $n$.  Hence, for cofinitely many $n$, if $p = \langle m , n \rangle$, we are in Case 2 of the definition of $i_p$.

It now follows that $f(x) \notin N_2$, since each time $p = \langle m,n \rangle$ is in Case 2 of the definition of $i_p$, we have 
\[
d_{10} (\alpha_{i_0}^{a_0} \concat \beta_{j_0}^{b_0} \concat\alpha_{i_1}^{a_1} \concat \ldots \concat \beta_{j_{p-1}}^{b_{p-1}} \concat \alpha_{i_p}^{a_p}) \geq (m+(3/4))/(4m+2) > 1/4,
\]
and this occurs infinitely often for some fixed $m$.
\end{proof}

\begin{claim}
If $x \notin L$, then $f(x) \notin N_1$.
\end{claim}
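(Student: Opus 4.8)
The plan is to mirror the argument of the preceding claim, now running it for the $j_p$ and the sets $L_{m,n}$ in place of the $i_p$ and the $F_{m,n}$, and then to observe that in the present situation the defect is carried by the density of $0$'s, which already obstructs order-$1$ normality. Thus, unlike Claim~3, there is no need to analyze order-$2$ behavior at all: a persistent excess of $0$'s is, by itself, enough to conclude $f(x) \notin N_1$.

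Fix $x \notin L$. Since $L = \bigcap_m \bigcup_n L_{m,n}$, there is some $m$ with $x \notin L_{m,n}$ for every $n$. For each $n$, let $k_n$ be least such that $[x \upto k_n] \cap L_{m,n} = \emptyset$; such $k_n$ exist because each $L_{m,n}$ is closed and $x$ avoids it, and $k_0 \leq k_1 \leq \cdots$ by the nesting $L_{m,0} \subseteq L_{m,1} \subseteq \cdots$. I would then invoke the identical two-case dichotomy used in the proof of Claim~3, with the $j_p$ and $L_{m,n}$ substituted throughout: either $k_r > \langle m, r \rangle$ for infinitely many $r$, in which case one selects $r_0 < r_1 < \cdots$ and $n_0 < n_1 < \cdots$ exactly as before so that Case~2 of the definition of $j_p$ holds at each $p = \langle m, n_e \rangle$; or $k_n \leq \langle m, n \rangle$ for all but finitely many $n$, in which case $[x \upto \langle m, n \rangle] \cap L_{m,n} = \emptyset$ cofinitely, again placing $p = \langle m, n \rangle$ in Case~2. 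Either way, this fixed $m$ acts infinitely often for the $j_p$.

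Each such occurrence forces $j_p = m$, and by the choice of $b_p$ in Case~2 of its definition, the corresponding prefix of the binary expansion of $f(x)$ satisfies $d_0\big(\alpha_{i_0}^{a_0} \concat \beta_{j_0}^{b_0} \concat \cdots \concat \beta_{j_p}^{b_p}\big) \geq (2m + 3/4)/(4m+1)$. A direct cross-multiplication shows $(2m + 3/4)/(4m+1) > 1/2$ for every $m$, so for this fixed $m$ the excess over $1/2$ is a fixed positive constant, attained at prefix lengths tending to infinity. Consequently the sequence $\big(d_0(f(x) \upto s)\big)_s$ cannot converge to $1/2$, and therefore $f(x) \notin N_1$. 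The only point needing genuine care is confirming that the two-case analysis transfers verbatim — which it does, since the definitions of $i_p$ and $j_p$, and of $a_p$ and $b_p$, are formally identical — and that the density bound is realized along a sequence of prefixes of unbounded length rather than witnessed merely once; this is precisely what the ``acts infinitely often'' conclusion supplies.
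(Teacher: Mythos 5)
Your proof is correct and takes essentially the same approach as the paper, which simply cites the two-case dichotomy from the preceding claim (with $L_{m,n}$ and $j_p$ in place of $F_{m,n}$ and $i_p$) to conclude that some $m$ acts infinitely often for the $j_p$, whence $f(x) \notin N_1$. Your write-up merely makes explicit the details the paper leaves implicit, including the density bound $(2m+3/4)/(4m+1) > 1/2$ realized along infinitely many prefixes.
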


\begin{proof}[Proof of claim]
As in the second half of the proof of the previous claim, we observe that there is some $m$ which acts infinitely often for the $j_p$ and conclude that $f(x) \notin N_1$.
\end{proof}

This completes the proof.


\section{A generalization}

We now indicate how to generalize the preceeding argument to an arbitrary fixed base $b$.  In the first place, our definition of $d_\sigma (\alpha)$ may be extended to strings $\sigma , \alpha \in \{0,1, \ldots , b-1\}^{< \omega}$.  Namely, $d_\sigma (\alpha)$ is the number of times $\sigma$ occurs as a substring of $\alpha$, divided by $|\alpha|$.  Also, if $x \in \{0,1, \ldots , b-1\}^\NN$, we let 
\[
(a \, . \, x)_b = a + \sum_{n=0}^\infty \frac{x(n)}{b^{n+1}}.
\]

\begin{definition}
For integers $b, r$, with $b \geq 2$ and $r \geq 1$, and $x \in \{0 , 1 , \ldots , b-1\}^\NN$, we say that a real number $(a \, . \, x)_b$ is {\em order-$r$ normal} in base $b$ iff, for each $\sigma \in \{0 , 1 , \ldots , b-1\}^r$, 
\[
\lim_{k \rightarrow \infty} d_\sigma (x \upto k) = b^{-r}.
\]
We let $N^b_r$ denote the set of real numbers which are order-$r$ normal in base $b$.
\end{definition}

It is important to note that $N^b_s \subseteq N^b_r$, if $r < s$.  This follows from the fact that, for any $\sigma \in \{ 0, 1 , \ldots , b-1\}^r$, there are $b^{r-s}$ many $\tau \in \{ 0, 1 , \ldots , b-1\}^s$, having $\sigma$ as an initial segment.  Hence, if $(0\, . \, x)_b \in N^b_s$ and $\sigma \in \{ 0, 1 , \ldots , b-1\}^r$,
\[
\lim_{k \rightarrow \infty} d_\sigma (x \upto k) = b^s \cdot b^{r-s} = b^r.
\]
We now state and sketch the proof of a generalization of Theorem~\ref{T1}.

\begin{theorem}
For each base $b \geq 2$ and $s > r \geq 1$, the set $N^b_r \setminus N^b_s$ is $\mathcal D_2 (\mathbf \Pi^0_3)$-complete.
\end{theorem}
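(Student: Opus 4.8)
The plan is to rerun the reduction of Theorem~\ref{T1} essentially unchanged, replacing only the two families of base-$2$ blocks by base-$b$ families adapted to the orders $r$ and $s$. As before, fix $\mathbf\Pi^0_3$ sets $L=\bigcap_m\bigcup_n L_{m,n}\supseteq F=\bigcap_m\bigcup_n F_{m,n}$ with the $L_{m,n}$, $F_{m,n}$ increasing in $n$, and define a continuous map $f$ into $\RR$ (now via a base-$b$ expansion) by the identical permitting recursion: the index $i_p$ tracks membership in $F$, the index $j_p$ tracks membership in $L$, with the same Case~1/Case~2 dichotomy and the same rule of choosing $a_p$, $b_p$ large in Case~2. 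The only new ingredient is the blocks $\alpha_n$, $\beta_n$ described below; granting their density behaviour, the four claims from the proof of Theorem~\ref{T1} transfer word for word and give $f^{-1}(N^b_r\setminus N^b_s)=L\setminus F$.

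Let $\gamma$ be a de Bruijn word of order $s$ over $\{0,\ldots,b-1\}$ --- a word of length $b^s$ in which, read cyclically, every string of length $s$ occurs exactly once --- so that $\gamma^\infty\in N^b_s$. The $\beta$-family is easy: since $N^b_s\subseteq\cdots\subseteq N^b_1$, to defeat order-$r$ normality it is enough to defeat order-$1$ normality, so I would take $\beta_n=\gamma^n\concat 0$. Then $\lim_k d_0(\beta_n^\infty\upto k)=(nb^{s-1}+1)/(nb^s+1)>b^{-1}$ for each fixed $n$, while this limit decreases to $b^{-1}$ as $n\to\infty$ and $\beta_n^\infty$ approaches $N^b_s$. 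The $\alpha$-family is the crux. I need, for each fixed $n$, that $\alpha_n^\infty$ be \emph{exactly} order-$r$ normal yet fail to be order-$s$ normal, the failure tending to $0$ as $n\to\infty$. Exactness here is genuinely necessary: when $x\in L\setminus F$ some bounded index $m$ acts infinitely often for the $i_p$, so $\alpha_m$ is inserted with unbounded multiplicity, and any nonzero order-$r$ defect of $\alpha_m^\infty$ would survive the averaging and spoil $f(x)\in N^b_r$.

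I would build $\alpha_n$ inside the de Bruijn graph $G_{s-1}$ whose vertices are the length-$(s-1)$ strings and whose edges are the length-$s$ strings, the edge $\sigma$ running from $\sigma\upto(s-1)$ to $\sigma(1)\cdots\sigma(s-1)$. A closed walk that visits every vertex the same number of times is \emph{exactly} order-$(s-1)$ normal, hence exactly order-$t$ normal for all $t\le s-1$ and in particular order-$r$ normal, because for $t\le s-1$ the number of occurrences of a length-$t$ string is the sum of the (equal) occurrence counts of those length-$(s-1)$ strings that begin with it; order-$s$ normality, by contrast, is exactly the statement that the walk uses every \emph{edge} equally often. I therefore want a walk that is vertex-uniform but edge-non-uniform. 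Fix a nonzero integer vector $\psi$ on the edges whose in-sum and out-sum vanish at every vertex --- such $\psi$ exist for all $b\ge 2$, $s\ge 2$, by a rank count (the solution space has dimension at least $b^{s-1}(b-2)+1\ge 1$), a concrete example in $G_1$ being $+1$ on $01,10$ and $-1$ on $00,11$ --- and let $\alpha_n$ be an Eulerian circuit of the balanced connected multigraph obtained from $(n+c)$ copies of each edge by adding $\psi$, where the fixed offset $c$ keeps all multiplicities positive. Then every vertex is visited exactly $b(n+c)$ times, so $\alpha_n^\infty$ is exactly order-$r$ normal, while a chosen edge $\tau$ with $\psi(\tau)>0$ satisfies $\lim_k d_\tau(\alpha_n^\infty\upto k)=b^{-s}+\psi(\tau)/\bigl(b^s(n+c)\bigr)>b^{-s}$, the excess tending to $0$ as $n\to\infty$.

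With these two limit computations replacing the two displayed limits in the proof of Theorem~\ref{T1}, the Case~2 thresholds are set exactly as there, large enough to force the partial density of $0$ (respectively of $\tau$) above $b^{-1}$ (respectively above $b^{-s}$) by a margin depending only on $m$, and the verification is identical: for $x\in F$ all indices tend to infinity and the vanishing perturbations leave $f(x)\in N^b_s$; for $x\in L\setminus F$ the $\beta$-indices tend to infinity while some bounded $\alpha_m$ acts infinitely often, giving $f(x)\in N^b_r\setminus N^b_s$; and for $x\notin L$ some bounded $\beta_m$ acts infinitely often, so $f(x)\notin N^b_1$ and a fortiori $f(x)\notin N^b_r$. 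I expect the main obstacle to be exactly the double demand on the $\alpha$-family together with the two averaging principles it serves: that the exactly-balanced bounded-index blocks contribute no order-$r$ error even at huge multiplicity (needed for $x\in L\setminus F$), and dually that the growing-index blocks, whose perturbations have vanishing density, do not obstruct $N^b_s$ (needed for $x\in F$).
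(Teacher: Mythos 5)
Your proposal is correct, and it follows the paper's overall reduction exactly: the same permitting recursion, the same role for the two block families, and your $\beta_n=\gamma^n\concat 0$ is literally the paper's $\beta_n=\theta^n\concat 0$, since Good's strings are de Bruijn words. Where you genuinely diverge is the construction of the $\alpha$-family, and the divergence is substantive. The paper sets $\alpha_n=\theta^n\concat\mu$ with $|\theta|=b^s$, $(0\,.\,\theta^\infty)_b\in N^b_s$, $|\mu|=b^r$, $(0\,.\,\mu^\infty)_b\in N^b_r$, and asserts $(0\,.\,\alpha_n^\infty)_b\in N^b_r\setminus N^b_s$; you instead take Eulerian circuits of the de Bruijn multigraph with edge multiplicities $n+c+\psi(e)$, where $\psi$ is a nonzero integer edge vector with vanishing in- and out-sums at every vertex, so that length-$(s-1)$ windows (vertex visits) are exactly uniform while length-$s$ windows (edge traversals) are not. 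Both constructions produce blocks whose order-$s$ defect is of order $1/n$, which is what the Case-2 thresholds consume. What your route buys is precisely the exactness you flag as indispensable --- and you are right that it is indispensable: a bounded $m$ acting infinitely often inserts $\alpha_m$ with asymptotically dominant density, so any fixed order-$r$ defect of $\alpha_m^\infty$ would destroy $f(x)\in N^b_r$ for $x\in L\setminus F$. Notably, the paper's sketch, read literally with arbitrary Good strings, does not deliver this exactness: length-$r$ windows crossing the $\theta\mu$ and $\mu\theta$ junctions need not reproduce the cyclic counts. For instance, with $b=2$, $r=2$, $s=3$, $\theta=00010111$ and $\mu=1100$ (both de Bruijn words), the cyclic word $\theta^n\concat\mu$ contains $00$ exactly $2n+2$ times among its $8n+4$ cyclic windows, so $(0\,.\,(\theta^n\concat\mu)^\infty)_2\notin N^2_2$ for every $n$. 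The paper's construction is rescued by an implicit alignment convention (rotate $\theta$ and $\mu$ so they share their first $r-1$ symbols, say $0^{r-1}$; then junction windows coincide with wrap-around windows and the counts become exactly additive), presumably among its ``certain other minor modifications''; your graph-theoretic construction makes exactness automatic by design, at the cost of the circulation-space rank count and the Eulerian-circuit argument, both of which you execute correctly (in particular $\sum_e\psi(e)=0$, so the period is exactly $b^s(n+c)$, and positivity of the multiplicities gives connectivity). One caveat applies equally to you and to the paper: for $r\ge 2$, ``transfers word for word'' still conceals the bookkeeping that length-$r$ windows crossing junctions between consecutive blocks of $f(x)$ contribute $O(r)$ errors per block, harmless only because Case-2 block lengths tend to infinity while short Case-1 blocks are sparse; in Theorem~\ref{T1}, where $r=1$, this issue is invisible.
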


\begin{proof}[Sketch of proof]
I.~J.~Good \cite{good.normal.recurring} showed that, for each $b,r$ as in the definition above, there exists a finite string $\theta \in \{0,1, \ldots , b-1\}^{b^r}$ such that the real number $(0 \, . \, \theta ^\infty)_b \in N^b_r$.  If $s > r$, there are $b^s$ possible strings of length $s$ using digits $\{ 0, 1, \ldots , b-1\}$.  Thus, if $\theta \in \{0,1, \ldots , b-1\}^{b^r}$, then $(0 \, . \, \theta^\infty)_b$ cannot be order-$s$ normal in base $b$, since there are at most $b^r$ substrings of $\theta^\infty$ of any fixed length.  It follows that, if $\theta$ is as in Good's result with $(0\, . \, \theta^\infty)_b \in N^b_r$, then $(0\, . \, \theta^\infty)_b$ is not order-$s$ normal, for any $s > r$.

Now fix a base $b$ and $r<s$.  Let $\theta , \mu \in \{ 0 ,1, \ldots , b-1\}^{< \omega}$ be such that 
\begin{itemize}
\item $|\theta| = b^s$,
\item $|\mu | = b^r$, 
\item $(0\, . \, \theta^\infty)_b \in N^b_s$ and
\item $(0\, . \, \mu^\infty)_b \in N^b_r$.
\end{itemize}
Following the notation of the proof of Theorem~\ref{T1}, let $\alpha_n = \theta^n \concat \mu$ and $\beta_n = \theta^n \concat 0$.  Note that, for all $n$, $0 \, . \, \alpha_n^\infty \in N^b_r \setminus N^b_s$, whereas, for all $n$, $0\, . \, \beta_n^\infty \notin N^b_r$.  Observe that, if 
\[
y = (0\, . \, \alpha_{i_0}^{a_0} \concat \beta_{j_0}^{b_0} \concat\alpha_{i_1}^{a_1} \concat \beta_{j_1}^{b_1} \concat \ldots)_b,
\]
then $y \in N^b_r$ if $j_p \rightarrow \infty$, as $p \rightarrow \infty$.  Similarly, $y \in N^b_s$, if $i_p \rightarrow \infty$ and $j_p \rightarrow \infty$.

Following the proof of Theorem~\ref{T1}, with these new $\alpha_n$ and $\beta_n$ and certain other minor modifications yields a proof of the theorem above.
\end{proof}



\section{The proof of Theorem~\ref{T2}}

Fix a descending sequence $F_1 \supseteq F_2 \supseteq \ldots$ of $\mathbf \Pi^0_3$ sets.  For each $k$, let $F_{k,m,n}$ be closed sets with 
\[
F_k = \bigcap_m \bigcup_n F_{k,m,n}.
\]
We may assume that, for each pair $k,m$, we have $F_{k,m,0} \subseteq F_{k,m,1} \subseteq \ldots$.  Our objective is to show that $\bigcup_k F_{2k+1} \setminus F_{2k+2}$ is a continuous preimage of $\bigcup_k N_{2k+1} \setminus N_{2k+2}$, where $N_k$ denotes the set of real numbers in $[0,1]$ which are order-$k$ normal.  To this end, we will define a continuous function $f: \{0,1\}^\NN \rightarrow \{0,1\}^\NN$ such that, for each $x \in \{0,1\}^\NN$, 
\[
x \in F_k \iff 0 \, . \, f(x) \in N_k.
\]
Given $x \in \{0,1\}^\NN$, we will define finite binary strings, $\sigma_t$, with $\sigma_0 \preceq \sigma_1 \preceq \ldots$ and let $f(x) = \bigcup \sigma_t$.

Before proceeding, we introduce some notation for the sake of the construction.  For each $i \in \NN$, $i > 0$, let $\eta_i \in \{0,1\}^i$ be, as in I.~J.~Good \cite{good.normal.recurring}, such that $0\, . \, (\eta_i)^\infty$ is order-$k$ normal.  Note that each $\alpha \in \{0,1\}^i$ must occur exactly once in each period of the repeating decimal $0 \, . \, (\eta_i)^\infty$.  Also, since $|\eta_i| = 2^i$, the real number $0 \, . \, (\eta_i)^\infty$ is not order-$(i+1)$ normal, as there are at most $2^i$ distinct substrings of $(\eta_i)^\infty$ of any fixed length.  For each $i$, we therefore fix an $\alpha_i \in \{0,1\}^i$ which is not a substring of $(\eta_{i-1})^\infty$.

For each triple $k,m,n$, we now let 
\[
\tau_{k,m,n} = (\eta_{k,m})^i \concat (\eta_{k-1})^j,
\]
where $i,j \in \NN$ are chosen such that the following hold.
\begin{itemize}
\item For each triple $k,m,n$ and each $\alpha \in \{0,1\}^{\leq k+m}$,
\[
\left| \left(\lim_{s\rightarrow \infty} d_\alpha \big( (\tau_{k,m,n})^\infty \upto s\big)\right) - 2^{-|\alpha|} \right| < 2^{-(k+m)}.
\]
\item For each pair $k,m$, there exists $r_{k,m} < 2^{-k}$ such that, for all $n$, 
\[
\lim_{s \rightarrow \infty} d_{\alpha_k} \big( (\tau_{k,m,n} )^\infty \upto s \big) < r_{k,m}.
\]
\item For each triple $k,m,n$ and each $\alpha \in \{0,1\}^{\leq k-1}$,
\[
\left| \left( \lim_{s \rightarrow \infty} d_\alpha \big( (\tau_{k,m,n})^\infty \upto s\big) \right)  - 2^{-|\alpha|} \right| < 2^{-\langle k,m,n \rangle}.
\]
\end{itemize}

\medskip

{\scshape The construction.}  At this point, fix $x \in \{0,1\}^\NN$.  As indicated above, we will define binary strings $\sigma_t$, determined by $x$. For each $t = \langle k,m,n \rangle$, we distinguish between two distinct cases.  We say that $t = \langle k,m,n \rangle$ is in {\em case 1} if 
\begin{itemize}
\item $[x \upto \langle m,n\rangle ] \cap F_{k,m,n} \neq \emptyset$ and,
\item for each $n' < n$, if $[x \upto \langle m,n-1\rangle] \cap F_{k,m,n'} \neq \emptyset$, then $[x \upto \langle m,n\rangle ] \cap F_{k,m,n'} \neq \emptyset$.
\end{itemize} 
Likewise, we say that $t = \langle k,m,n \rangle$ is in {\em case 2} if
\begin{itemize}
\item $[x \upto \langle m,n\rangle ] \cap F_{k,m,n} = \emptyset$ or
\item there exists $n'< n$ such that $[x \upto \langle m,n-1\rangle ] \cap F_{k,m,n'} \neq \emptyset$, but $[x \upto \langle m,n\rangle ] \cap F_{k,m,n'} = \emptyset$.
\end{itemize}

In the process of defining the binary strings $\sigma_t$, we also define binary sequences $y_t \in \{0,1\}^\NN$ such that $\sigma_t \prec y_t$ and functions $\mu_t : \{0,1\}^{< \omega} \times \NN \rightarrow \NN$ such that, for each $\alpha \in \{0,1\}^{<\omega}$ and $p \in \NN$, $\mu_t (\alpha, p)$ is the least $q \in \NN$ with
\[
\left| d_\alpha (y_t \upto q') - \left(\lim_{s \rightarrow \infty}d_\alpha (y_t \upto s)\right) \right| < 2^{-p},
\]
for all $q' \geq q$.  Note that the limit in the expression above is guaranteed to exist because $y_t$ is eventually periodic.  We call the map $\mu_t$ the {\em modulus of distribution} for $y_t$.

Suppose that $\sigma_{t-1}$ is given, we show how to define $\sigma_t$.  (In the case of $t = 0$, we let $\sigma_{-1}$ be the empty string, for notational purposes.)  

First suppose that $t = \langle k,m,n \rangle$ is in case 1.  Let $y_t = \sigma_{t-1} \concat (\eta_t)^\infty$ and $\sigma_t = \sigma_{t-1}\concat (\eta_t)^i$, where $i$ is large enough that the following hold.
\begin{enumerate}
\item For all $\alpha \in \{0,1\}^{\leq t}$,
\[
\left| d_\alpha \big( \sigma_t \big) - 2^{-|\alpha|} \right| < 2^{-t}.
\]
\item[(2a)] If $t+1$ is in case 1 and $\mu : \{0,1\}^{< \omega} \times \NN \rightarrow \NN$ is the modulus of distribution for $\sigma_t \concat (\eta_t)^i \concat (\eta_{t+1})^\infty$, then
\[
\mu \upto \{0,1\}^{\leq t} \times \{ 0 , \ldots , t\} = \mu_t \upto \{0,1\}^{\leq t} \times \{ 0 , \ldots , t\}.
\]
\item[(2b)]  If $t+1 = \langle k',m',n' \rangle$ is in case 2, $\mu : \{0,1\}^{< \omega} \times \NN \rightarrow \NN$ is the modulus of distribution for $\sigma_t \concat (\eta_t)^i \concat (\tau_{k',m',n'})^\infty$ and $p = \min \{ t , k' + m' \}$, then 
\[
\mu \upto \{0,1\}^{\leq p} \times \{ 0 , \ldots , p\} = \mu_t \upto \{0,1\}^{\leq p} \times \{ 0 , \ldots , p\}
\]
and, if $k^* = \min \{ t , k' - 1\}$,
\[
\mu \upto \{0,1\}^{\leq k^*} \times \{ 0 , \ldots , t\} = \mu_t \upto \{0,1\}^{\leq k^*} \times \{ 0 , \ldots , t\}.
\]
\end{enumerate}

Now suppose that $t = \langle k,m,n \rangle$ is in case 2.  Let $y_t = \sigma_{t-1} \concat (\tau_{k,m,n})^\infty$ and $\sigma_t = \sigma_{t-1} \concat (\tau_{k,m,n})^i$, where $i$ is large enough that the following hold.
\begin{enumerate}\setcounter{enumi}{2}
\item For each $\alpha \in \{0,1\}^{\leq k+m}$, 
\[
\left| d_\alpha (\sigma_t) - 2^{-|\alpha|} \right| < 2^{-(k+m)}.
\]
\item For each $\alpha \in \{0,1\}^{\leq k-1}$,
\[
\left| d_\alpha (\sigma_t) - 2^{-|\alpha|} \right| < 2^{-t}.
\]
\item $d_{\alpha_k} ( \sigma_t ) < r_{k,m}$, where $\alpha_k$ and $r_{k,m}$ are as above.
\item[(6a)]  If $t+1$ is in case 1, $\mu : \{0,1\}^{< \omega} \times \NN \rightarrow \NN$ is the modulus of distribution for $\sigma_t \concat (\tau_{k,m,n})^i \concat (\eta_{t+1})^\infty$ and $p = \min \{ k+m , t+1 \}$, then 
\[
\mu \upto \{0,1\}^{\leq p} \times \{ 0 , \ldots , p\} = \mu_t \upto \{0,1\}^{\leq p} \times \{ 0 , \ldots , p\}
\]
and, if $k^* = \min \{ k-1 , t+1 \}$,
\[
\mu \upto \{0,1\}^{\leq k^*} \times \{ 0 , \ldots , t\} = \mu_t \upto \{0,1\}^{\leq k^*} \times \{ 0 , \ldots , t\}.
\]
\item[(6b)]  If $t+1 = \langle k',m',n' \rangle$ is in case 2, $\mu : \{0,1\}^{< \omega} \times \NN \rightarrow \NN$ is the modulus of distribution for $\sigma_t \concat (\tau_{k,m,n})^i \concat (\tau_{k',m',n'})^\infty$ and $p = \min \{ k+m , k'+m' \}$, then 
\[
\mu \upto \{0,1\}^{\leq p} \times \{ 0 , \ldots , p\} = \mu_t \upto \{0,1\}^{\leq p} \times \{ 0 , \ldots , p\}
\]
and, if $k^* = \min \{ k-1 , k'-1\}$, 
\[
\mu \upto \{0,1\}^{\leq k^*} \times \{ 0 , \ldots , t\} = \mu_t \upto \{0,1\}^{\leq k^*} \times \{ 0 , \ldots , t\}.
\]
\end{enumerate}

We now let $f(x) = \bigcup_t \sigma_t$.  This completes the definition of $f$.

\medskip

{\scshape Verification.}  The claims below will complete the proof of Theorem~\ref{T2}.

\begin{claim}
The map $f:\{0,1\}^\NN \rightarrow \{0,1\}^\NN$ in continuous.
\end{claim}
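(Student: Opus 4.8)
The plan is to reproduce, in the more elaborate setting of Theorem~\ref{T2}, the continuity argument already used for Theorem~\ref{T1}. It suffices to establish two facts: that each string $\sigma_t$ is determined by a finite initial segment of $x$, and that $|\sigma_t| \to \infty$ as $t \to \infty$. Granting these, continuity is immediate: given $x$ and a target precision $\ell$, one chooses $t$ with $|\sigma_t| \geq \ell$, so that $f(x) \upto \ell$ is an initial segment of $\sigma_t$; since $\sigma_t$ depends on only finitely many coordinates of $x$, every $y$ agreeing with $x$ on those coordinates satisfies $f(y) \upto \ell = f(x) \upto \ell$.

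The core of the argument is an induction on $t$ producing an $M_t \in \NN$ such that $x \upto M_t$ determines the case (1 or 2) assigned to $t$, the exponent chosen at stage $t$, and hence the whole string $\sigma_t$ (along with $y_t$ and $\mu_t$). The case of $t = \langle k,m,n\rangle$ is decided by the finitely many clopen--closed intersection conditions in the definitions of case 1 and case 2, each of which compares $[x \upto \langle m,n\rangle]$ or $[x \upto \langle m,n-1\rangle]$ with some $F_{k,m,n'}$, $n' \leq n$; since $[x \upto \langle m,n\rangle]$ and $[x \upto \langle m,n-1\rangle]$ are clopen, the $F_{k,m,n'}$ are closed, and $\langle m,n-1\rangle \leq \langle m,n\rangle$ (the pairing function being increasing in its second argument), every one of these is settled by $x \upto \langle m,n\rangle$. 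Once the case of $t$ is fixed, the block appended at stage $t$ is a power of the fixed string $\eta_t$ (case 1) or $\tau_{k,m,n}$ (case 2), neither of which depends on $x$, and the exponent is the least integer meeting the conditions listed at that stage. Because those conditions refer to whether $t+1 = \langle k',m',n'\rangle$ is in case 1 or case 2, fixing the exponent also requires the case of $t+1$, which by the same reasoning is determined by $x \upto \langle m',n'\rangle$. Thus $M_t = \max\{M_{t-1}, \langle m,n\rangle, \langle m',n'\rangle\}$ closes the induction.

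That $|\sigma_t| \to \infty$ is built into the construction: $|\sigma_t|$ is non-decreasing since $\sigma_{t-1} \preceq \sigma_t$, and the precision demanded by conditions (1), (3) and (4), together with the modulus-of-distribution conditions (2a), (2b), (6a) and (6b), forces a nonempty block to be appended, so the lengths grow without bound. Combining this with the finite-dependence established above yields continuity, exactly as explained in the first paragraph.

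The step I expect to require the most care is not conceptual but a matter of bookkeeping: conditions (2a), (2b), (6a) and (6b) build a one-stage look-ahead into the choice of exponent at stage $t$, so I must verify that this look-ahead does not cascade into a dependence on infinitely many coordinates of $x$. It does not, because the case of stage $t+1$ is itself fixed by a finite initial segment of $x$, the look-ahead never extends beyond a single stage, and the moduli of distribution appearing in those conditions are computed from $\sigma_{t-1}$ and the fixed strings $\eta_{t+1}$ and $\tau_{k',m',n'}$, introducing no new dependence on $x$.
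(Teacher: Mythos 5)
Your proposal is correct and takes essentially the same approach as the paper, whose entire proof is the one-line observation that each bit of $f(x)$ is determined by finitely many bits of $x$; your induction on $t$, including the check that the one-stage look-ahead in conditions (2a), (2b), (6a) and (6b) does not cascade into infinite dependence, is just a careful elaboration of that same fact. One minor quibble: the growth $|\sigma_t| \to \infty$ follows most cleanly from reading ``large enough'' as choosing a positive exponent of the nonempty block $\eta_t$ or $\tau_{k,m,n}$, rather than from the precision demands of conditions (1), (3) and (4) as you suggest.
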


\begin{proof}[Proof of claim]
This follows from the fact that, given $x \in \{0,1\}^\NN$, each bit of $f(x)$ is determined by finitely many bits of $x$.
\end{proof}

In what follows, let $x \in \{0,1\}^\NN$ be fixed and let $\sigma_t$, $y_t$, $\mu_t$, etc.~be defined as above for $x$.

\begin{claim}
If $x \in F_{k_0}$, then $\lim_{t\rightarrow \infty} \mu_t (\alpha , p)$ exists, for each $\alpha \in \{0,1\}^{\leq k_0}$ and $p \in \NN$.
\end{claim}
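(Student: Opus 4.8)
The plan is to fix $\alpha$ with $\len{\alpha}\le k_0$ and $p\in\NN$ and prove that the integer sequence $(\mu_t(\alpha,p))_t$ is eventually constant; since the values are natural numbers this is precisely the assertion that the limit exists. The two ingredients will be, on the one hand, the descending hypothesis on the $F_k$, which locates where the relevant densities settle down, and, on the other, the built-in compatibility conditions (2a), (2b), (6a), (6b), which were designed exactly to carry the value of the modulus from one stage to the next.

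First I would exploit the hypothesis. Because $F_1\supseteq F_2\supseteq\cdots$, membership $x\in F_{k_0}$ gives $x\in F_k$ for every $k\le k_0$, hence for every level $k\le\len{\alpha}$. For each such $k$ and each $m$, the argument from the verification of Theorem~\ref{T1} shows that only finitely many $n$ leave $\langle k,m,n\rangle$ in case 2: if $n_{k,m}$ is least with $x\in F_{k,m,n_{k,m}}$, monotonicity of $(F_{k,m,n})_n$ forces case 1 at all later $n$. Consequently, among the case-2 steps of any fixed level $k\le\len{\alpha}$ the parameter $m$ tends to infinity, so $k+m\to\infty$ along them. Feeding this into the defining inequalities, I would conclude that both $d_\alpha(\sigma_t)$ and the block-density $\lim_{s}d_\alpha(y_t\upto s)$ converge to $2^{-\len{\alpha}}$ as $t\to\infty$: on case-1 steps and on case-2 steps of level $k>\len{\alpha}$ the error is at most $2^{-t}$ (conditions (1), (4) and the analogous $\tau$-estimates), while on case-2 steps of level $k\le\len{\alpha}$ it is at most $2^{-(k+m)}\to0$ (condition (3)).

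With the densities pinned down, I would transfer the modulus. For $t$ large, conditions (2a), (2b), (6a), (6b) give $\mu_{t+1}(\alpha,p)=\mu_t(\alpha,p)$ at once whenever every case-2 block involved in the step $t\to t+1$ has level exceeding $\len{\alpha}$ (the ``$k^\ast$''-rectangle then reaches $\alpha$ at all precisions up to $t$) or has $k+m\ge\max\{\len{\alpha},p\}$ (the ``$p$''-rectangle then reaches $(\alpha,p)$). The only steps not covered by this are those touching a case-2 block of level $k\le\len{\alpha}$ with small $k+m$; by the finiteness just established there are only finitely many of these, and beyond the last of them the modulus can no longer move. This yields eventual constancy of $(\mu_t(\alpha,p))_t$.

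The step I expect to be the main obstacle is the reconciliation in the previous paragraph, specifically a residual case hidden inside (6b): a transition between two case-2 blocks in which the low-level block ($k\le\len{\alpha}$) already has large $k+m$ while its neighbour has level $>\len{\alpha}$ but small $k'+m'$, so that the guaranteed rectangle $\min\{k+m,k'+m'\}$ is small. Here the compatibility condition alone does not force agreement, and one must instead observe that for such a neighbour the relevant $\tau$-estimate gives limit density within $2^{-\langle k',m',n'\rangle}=2^{-(t+1)}$ of $2^{-\len{\alpha}}$, so that the achievement point of the modulus already lies inside the stabilized common prefix $\sigma_t$ and is therefore preserved. Combining this density precision with the finiteness of genuinely obstructive steps is the delicate part of the argument; the remaining bookkeeping---passing from eventual agreement of consecutive moduli to existence of the limit---is routine.
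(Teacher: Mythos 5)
Your overall strategy is the same as the paper's: use $x\in F_{k_0}$ to show that case-2 stages of low level with small $k+m$ occur only finitely often, and then propagate the value $\mu_t(\alpha,p)$ stage by stage via conditions (2a), (2b), (6a), (6b). (The paper runs this with the threshold $k_0$ rather than your $\len{\alpha}$: it picks $t_0\ge\max\{k_0,p\}$ so that every case-2 stage $t\ge t_0$ of level $k\le k_0$ has $k+m\ge\max\{k_0,p\}$, then checks $\mu_{t+1}=\mu_t$ on $\{0,1\}^{\le k_0}\times\{0,\dots,p\}$ by cases; this difference is immaterial.) You are also right that the case analysis does not close by itself: a step joining a case-2 block of level $k\le\len{\alpha}$ with large $k+m$ to a case-2 block of level $k'>\len{\alpha}$ with $k'+m'<p$ is covered by neither rectangle of (6b), since $\min\{k+m,k'+m'\}<p$ while $\min\{k-1,k'-1\}=k-1<\len{\alpha}$. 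The paper hides this configuration under ``the arguments are analogous'' and never addresses it, so on this point your reading is the more careful one.

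The genuine gap is in your proposed repair of that configuration. Knowing that the limit density of $y_{t+1}$ at $\alpha$ is within $2^{-(t+1)}$ of $2^{-\len{\alpha}}$ does not let you conclude that ``the achievement point of the modulus already lies inside the stabilized common prefix $\sigma_t$ and is therefore preserved.'' The modulus $\mu_{t+1}(\alpha,p)$ is the least $q$ after which $d_\alpha(y_{t+1}\upto q')$ stays strictly within $2^{-p}$ of the limit density of $y_{t+1}$, and that reference limit differs from the one defining $\mu_t(\alpha,p)$ by as much as $2^{-(k+m)}+2^{-(t+1)}$, where all you know is $k+m\ge\max\{\len{\alpha},p\}$; so the corridor can shift by an amount comparable to its half-width $2^{-p}$. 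A shift of the reference value, however small, can move the least witness either earlier (a density value lying at distance exactly $\ge 2^{-p}$ from the old limit may fall within $2^{-p}$ of the new one) or later (a value with little margin inside the old corridor may exit the new one), and this is independent of how long the common prefix is: neither the values $d_\alpha(y_t\upto s)$ for $s\le\len{\sigma_t}$ nor the two limits depend on the exponent chosen at stage $t$. Hence closeness of the limits cannot yield the exact equality $\mu_{t+1}(\alpha,p)=\mu_t(\alpha,p)$ that your induction needs; closing this case requires either exact agreement of the limit densities (false for the $\tau$-blocks, whose densities are only $2^{-(k+m)}$- and $2^{-\langle k,m,n\rangle}$-approximate) or a weaker invariant, e.g.\ a uniform bound on $\mu_t(\alpha,p)$ together with a check that boundedness suffices for the later claims, neither of which your sketch supplies.
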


\begin{proof}[Proof of claim]
Assume $x \in F_{k_0}$.  Fix $p \in \NN$ and let $t_0 \geq \max\{k_0,p\}$ be large enough that, for all $t = \langle k,m,n \rangle \geq t_0$, whenever $k\leq k_0$ and $t$ is in case 2, we have $k+m \geq \max \{ k_0 , p \}$.  To see that there is such a $t_0$, observe that, given a fixed pair $k,m$, with $k \leq k_0$, we have $x \in F_{k,m,n}$, for all but finitely many $n$, say $n_0$ is the least such $n$.  Hence, we have that $\langle k,m,n \rangle$ is in case 1 for all $n \geq n_0$ large enough that 
\[
n' < n_0 \implies [x \upto \langle m,n \rangle] \cap F_{k,m,n'} = \emptyset.
\]
Hence, given any pair $k,m$, with $k \leq k_0$, there are only finitely many $n$ such that $\langle k,m,n \rangle$ is in case 2.  Thus, there are only finitely many $\langle k,m,n \rangle$ in case 2, with $k \leq k_0$ and $k+m <  \max \{ k_0 , p \}$.

We check that $\mu_{t+1} (\alpha , p) = \mu_t (\alpha , p)$, for all $t \geq t_0$ and $\alpha \in \{0,1\}^{\leq k_0}$.  We then conclude, by induction, that $\mu_t (\alpha , p) = \mu_{t_0} (\alpha , p)$, for all $t \geq t_0$ and $\alpha \in \{0,1\}^{\leq k_0}$.

Suppose that $t$ is in case 1.  In the first place, if $t+1$ is also in case 1, then, by condition (2a),
\begin{equation*}\tag{$*$}
\mu_{t+1} \upto \{0,1\}^{\leq k_0} \times \{ 0 , \ldots , p\} = \mu_t \upto \{0,1\}^{\leq k_0} \times \{ 0 , \ldots , p\},
\end{equation*}
since $t \geq t_0 \geq \max \{ k_0 , p\}$.  On the other hand, if $t+1 = \langle k',m',n' \rangle$ is in case 2 and $k_0 < k'$, we have that ($*$) again holds by condition (2b), since $k_0 \leq \min \{ t , k'-1\}$.  Finally, if $t+1 = \langle k',m',n' \rangle$ is in case 2 and $k' \leq k_0$, then ($*$) still holds by (2b), since
\[
\max\{ k_0 , p \} \leq \min \{ t , k'+m' \}.
\]

If $t = \langle k,m,n \rangle$ is in case 2, the arguments are analogous, using (6a) and (6b) above.  For instance, if $k \leq k_0$ and $t+1$ is in case 1, then, by assumption, $k+m \geq \max \{ k_0 , p\}$ and hence condition ($*$) holds by (6a), using the fact that $k_0 \leq \min\{ k+m , t+1 \}$.
\end{proof}

\begin{claim}
If $x \in F_{k_0}$, then
\[
\lim_{s\rightarrow \infty} d_\alpha \big( f(x) \upto s \big) = 2^{-|\alpha|},
\]
for each $\alpha \in \{0,1\}^{\leq k_0}$.
\end{claim}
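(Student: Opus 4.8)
The plan is to estimate $d_\alpha(f(x)\upto s)$ for large $s$ by locating the block in which the initial segment ends and comparing the empirical density along $f(x)$ with the limiting density along the eventually periodic sequence $y_t$ used to build that block. Fix $\alpha$ with $|\alpha|\le k_0$ and a target accuracy $2^{-p}$. For each $t$ write $L_t=\lim_{s\to\infty}d_\alpha(y_t\upto s)$ for the (existing) limiting density of $\alpha$ along $y_t$; this equals the cyclic frequency of $\alpha$ in the periodic block of $y_t$ and so is independent of the finite prefix $\sigma_{t-1}$. Given $s$, let $t$ be the unique index with $|\sigma_{t-1}|\le s\le|\sigma_t|$. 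Since $f(x)$ and $y_t$ share the initial segment $\sigma_{t-1}$ followed by a power of the very same block, they agree on their first $|\sigma_t|$ bits, so $d_\alpha(f(x)\upto s)=d_\alpha(y_t\upto s)$ for such $s$. I will then use
\[
\bigl|d_\alpha(f(x)\upto s)-2^{-|\alpha|}\bigr|\le \bigl|d_\alpha(y_t\upto s)-L_t\bigr|+\bigl|L_t-2^{-|\alpha|}\bigr|
\]
and show that, for all sufficiently large $s$, each summand is below $2^{-(p+1)}$.

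For the second summand I would show that $L_t\to 2^{-|\alpha|}$ as $t\to\infty$. When $t$ is in case $1$ and $t\ge|\alpha|$, the block is $\eta_t$ and order-$t$ normality of $0\, .\,(\eta_t)^\infty$ gives $L_t=2^{-|\alpha|}$ exactly. When $t=\langle k,m,n\rangle$ is in case $2$, the conditions defining $\tau_{k,m,n}$ bound $|L_t-2^{-|\alpha|}|$ by $2^{-t}$ if $|\alpha|\le k-1$ and by $2^{-(k+m)}$ if $|\alpha|\le k+m$. These bounds drop below any prescribed $2^{-(p+1)}$ for all but finitely many $t$; the only blocks that can escape are the case-$2$ blocks with both $k$ and $m$ small, and there are only finitely many of these, since $x\in F_{k_0}\subseteq F_k$ forces, exactly as in the preceding claim, each pair $(k,m)$ with $k\le k_0$ to be in case $2$ for only finitely many $n$. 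Hence past some index $T_0$ every block satisfies $|L_t-2^{-|\alpha|}|<2^{-(p+1)}$.

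For the first summand I would invoke the preceding claim, which, applied with accuracy $2^{-(p+1)}$, shows that $\mu_t(\alpha,p+1)$ is eventually constant in $t$, say equal to $Q$ for all $t\ge t_0$. By the definition of the modulus of distribution, $s\ge\mu_t(\alpha,p+1)$ forces $|d_\alpha(y_t\upto s)-L_t|<2^{-(p+1)}$. The crucial point is that the single bound $Q$ works for every large block, while the left endpoint $|\sigma_{t-1}|$ of block $t$ tends to infinity. Thus, taking $T\ge\max\{t_0,T_0,|\alpha|\}$ with $|\sigma_{T-1}|\ge Q$, any $s\ge|\sigma_{T-1}|$ lies in some block $t\ge T$ and satisfies $s\ge|\sigma_{t-1}|\ge Q=\mu_t(\alpha,p+1)$; hence both summands fall below $2^{-(p+1)}$ and the displayed estimate yields $|d_\alpha(f(x)\upto s)-2^{-|\alpha|}|<2^{-p}$. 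As $p$ was arbitrary, this gives $\lim_{s\to\infty}d_\alpha(f(x)\upto s)=2^{-|\alpha|}$.

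The main difficulty is controlling the density at points strictly inside a block rather than only at the boundaries $s=|\sigma_t|$, where conditions (1), (3) and (4) already give the estimate directly. This interior control is precisely what the modulus-of-distribution conditions (2a)--(6b) and the stabilization proved in the previous claim are engineered to supply, and the whole argument turns on pairing the uniformity of the stabilized modulus $Q$ with the divergence of the block lengths $|\sigma_{t-1}|$.
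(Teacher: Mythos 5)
Your proof is correct, and it rests on the same two pillars as the paper's argument: the stabilization of the moduli $\mu_t$ (the preceding claim) and the density requirements built into the blocks $\eta_t$ and $\tau_{k,m,n}$. The organization, however, is genuinely different. The paper first deduces, from $y_t \rightarrow f(x)$ together with the pointwise convergence of $\mu_t \upto \{0,1\}^{\leq k_0} \times \NN$, that the sequence $\big( d_\alpha \big( f(x) \upto s \big) \big)_{s \in \NN}$ is Cauchy, and only then identifies the limit along the subsequence of block boundaries $s = |\sigma_t|$, where conditions (1), (3) and (4) on the finite strings $\sigma_t$ force the value $2^{-|\alpha|}$. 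You never prove convergence abstractly: you estimate $\big| d_\alpha \big( f(x) \upto s \big) - 2^{-|\alpha|} \big|$ directly for every large $s$, splitting through the tail densities $L_t$ of the eventually periodic $y_t$, proving $L_t \rightarrow 2^{-|\alpha|}$ from the defining bullets for $\tau_{k,m,n}$ and the order-$t$ normality of $0 \, . \, (\eta_t)^\infty$, and discarding (exactly as in the preceding claim, via $x \in F_{k_0} \subseteq F_k$ for $k \leq k_0$) the finitely many bad case-2 blocks. What your version buys is explicitness at the paper's tersest point: stabilization of the moduli yields Cauchyness only when combined with the shared-prefix structure of the $y_t$ (or, as you do it, with convergence of the $L_t$), and your two-term estimate supplies exactly this, pairing the uniform stabilized modulus $Q$ with the divergence of the block lengths $|\sigma_{t-1}|$. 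What the paper's version buys is brevity: by working only at the boundaries $s = |\sigma_t|$ it can cite conditions (1), (3), (4) verbatim and never needs to discuss the limiting densities of the periodic tails at all in the identification step.
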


\begin{proof}
Observe that $y_t \rightarrow f(x)$, as $t \rightarrow \infty$.  The functions $\mu_t \upto \{0,1\}^{\leq k_0} \times \NN$ also form a (pointwise) convergent sequence, by the previous claim.  Fixing $\alpha \in \{0,1\}^{\leq k_0}$, it follows that the sequence $\big( d_\alpha \big( f(x) \upto s \big) \big)_{s \in \NN}$ is Cauchy and therefore convergent.  By conditions (1), (3) and (4) above, for each $\varepsilon > 0$, there are infinitely many $s \in \NN$ such that 
\[
\left| d_\alpha \big( f(x) \upto s \big) - 2^{-|\alpha|} \right| < \varepsilon.
\]
It follows that $\lim_{s \rightarrow \infty} d_\alpha \big( f(x) \upto s \big) = 2^{-|\alpha|}$.
\end{proof}

From the last two claims, we conclude that, if $x \in F_{k_0}$, we have $0 \, . \, f(x) \in N_{k_0}$.  The next claim asserts the converse.

\begin{claim}
If $x \notin F_{k_0}$, then $0 \, . \, f(x) \notin N_{k_0}$.
\end{claim}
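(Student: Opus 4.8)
The plan is to locate a fixed $m$ that witnesses the failure of $x\in F_{k_0}$ and to show that this $m$ forces infinitely many case-2 stages at level $k_0$, each of which leaves a prefix of $f(x)$ in which the special string $\alpha_{k_0}$ is underrepresented. Concretely, since $x\notin F_{k_0}=\bigcap_m\bigcup_n F_{k_0,m,n}$, there is some $m$ with $x\notin F_{k_0,m,n}$ for every $n$; I would fix this $m$ for the remainder of the argument and aim to contradict order-$k_0$ normality of $0\,.\,f(x)$ using the length-$k_0$ string $\alpha_{k_0}$.

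The central step is to prove that $t=\langle k_0,m,n\rangle$ falls in case 2 for infinitely many $n$. This is precisely the combinatorial situation handled in the second half of the proof of Claim 3 for Theorem~\ref{T1}, now with $F_{k_0,m,n}$ in the role of $F_{m,n}$ and with the prefix clock $\langle m,n\rangle$ governing the case distinction. Following that argument, I would let $\ell_n$ be least with $[x\upto \ell_n]\cap F_{k_0,m,n}=\emptyset$ (which exists since $x\notin F_{k_0,m,n}$ and $F_{k_0,m,n}$ is closed), note that $\ell_0\le\ell_1\le\cdots$ because $F_{k_0,m,0}\subseteq F_{k_0,m,1}\subseteq\cdots$, and split into two subcases. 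If $\ell_r>\langle m,r\rangle$ for infinitely many $r$, I would extract $r_e<n_e$ with $\langle m,n_e-1\rangle<\ell_{r_e}\le\langle m,n_e\rangle$; then for $p=\langle m,n_e\rangle$ the index $n'=r_e<n_e$ satisfies $[x\upto\langle m,n_e-1\rangle]\cap F_{k_0,m,n'}\neq\emptyset$ yet $[x\upto\langle m,n_e\rangle]\cap F_{k_0,m,n'}=\emptyset$, which is the second clause of case 2. Otherwise $\ell_n\le\langle m,n\rangle$ for cofinitely many $n$, so $[x\upto\langle m,n\rangle]\cap F_{k_0,m,n}=\emptyset$ cofinitely, which is the first clause of case 2. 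Either way, $\langle k_0,m,n\rangle$ is in case 2 for infinitely many $n$.

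With this in hand the conclusion is immediate from the bookkeeping built into the construction. For each such case-2 stage $t=\langle k_0,m,n\rangle$, condition (5) guarantees $d_{\alpha_{k_0}}(\sigma_t)<r_{k_0,m}$, and by the choice of $r_{k_0,m}$ we have $r_{k_0,m}<2^{-k_0}$. Since $\sigma_t\prec f(x)$, this reads $d_{\alpha_{k_0}}\big(f(x)\upto s\big)<r_{k_0,m}<2^{-k_0}$ for $s=|\sigma_t|$, and there are infinitely many such $s$. Hence $\liminf_{s\to\infty} d_{\alpha_{k_0}}\big(f(x)\upto s\big)\le r_{k_0,m}<2^{-k_0}$, so the sequence $\big(d_{\alpha_{k_0}}(f(x)\upto s)\big)_{s\in\NN}$ either diverges or converges to a value strictly below $2^{-k_0}$. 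In neither case does it converge to $2^{-k_0}$, and since $\alpha_{k_0}\in\{0,1\}^{k_0}$ this shows $0\,.\,f(x)$ is not order-$k_0$ normal, i.e.\ $0\,.\,f(x)\notin N_{k_0}$.

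I expect the only genuine work to be the case-2 extraction in the middle paragraph; it is not deep, but it must faithfully reproduce the index-chasing of Claim 3 for Theorem~\ref{T1}. A point worth stressing is that we never need to control the density of $\alpha_{k_0}$ between the chosen stages---the case-1 blocks $(\eta_t)^i$ may well pull it back toward $2^{-k_0}$---because it suffices to exhibit the single subsequence $s=|\sigma_t|$ taken over the infinitely many case-2 stages for the fixed witness $m$, along which (5) pins the density below $r_{k_0,m}$. The remaining steps are then a direct reading-off of condition (5) together with the defining inequality $r_{k_0,m}<2^{-k_0}$.
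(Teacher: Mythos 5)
Your proposal is correct and follows essentially the same route as the paper's own proof: fix the witness $m$ with $x \notin F_{k_0,m,n}$ for all $n$, define the least indices $\ell_n$ (the paper's $s_n$), split into the same two subcases to produce infinitely many case-2 stages $\langle k_0,m,n\rangle$, and then invoke condition (5) together with $r_{k_0,m} < 2^{-k_0}$ along the prefixes $\sigma_t$. The only additions are harmless elaborations the paper leaves implicit (the existence of $\ell_n$ from closedness, and the remark that a single bad subsequence of prefixes suffices to defeat convergence to $2^{-k_0}$).
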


\begin{proof}
Assume $x \notin F_{k_0}$ and $m_0 \in \NN$ is such that $x \notin F_{k_0 , m_0 , n}$, for all $n \in \NN$.  For each $n$, let $s_n \in \NN$ be least such that $[ x\upto s_n ] \cap F_{k_0 , m_0 , n} = \emptyset$.  We consider two distinct cases.

First, suppose that there are infinitely many $n$ such that $s_n > \langle m_0 , n \rangle$.  In this case, there exist $n_0 < n_1 < \ldots$ and $p_0 < p_1 < \ldots$ such that, for each $j$,
\begin{itemize}
\item $p_j > n_j$ and
\item $\langle m_0 , p_j - 1\rangle < s_{n_j} \leq \langle m_0 , p_j \rangle$.
\end{itemize}
Thus, for each $j$, 
\[
[x \upto \langle m_0 , p_j - 1\rangle ] \cap F_{k_0, m_0 , n_j} \neq \emptyset \qquad \& \qquad [x \upto \langle m_0 , p_j \rangle] \cap F_{k_0 , m_0 , n_j} = \emptyset.
\]
It follows that each $t_j = \langle k_0 , m_0 , p_j \rangle$ is in case 2 and, hence, for each $j$, 
\[
d_{\alpha_{k_0}} \big( f(x) \upto |\sigma_{t_j}| \big) < r_{k_0 , m_0} < 2^{-k_0},
\]
by condition (5) above.  Thus, $0 \, . \, f(x) \notin N_{k_0}$.

On the other hand, if $s_n \leq \langle m_0 , n\rangle$, for all but finitely many $n$, we have 
\[
[x \upto \langle m_0 , n \rangle] \cap F_{k_0 , m_0 , n} = \emptyset,
\]
for cofinitely many $n$.  Thus, $\langle k_0 , m_0 , n\rangle$ is in case 2 for cofinitely many $n$ and again $0 \, . \, f(x) \notin N_{k_0}$.
\end{proof}

We conclude that, for each $k \geq 1$ and $x \in \{0,1\}^\NN$, we have $ x \in F_k$ iff $0\, . \, f(x) \in N_k$.  It follows that 
\[
x \in \bigcup_k F_{2k+1} \setminus F_{2k+2} \iff 0\, . \, f(x) \in  \bigcup_k N_{2k+1} \setminus N_{2k+2},
\]
for each $x \in \{0,1\}^\NN$.  This completes the proof of Theorem~\ref{T2}.




\end{document}